\theoremstyle{plain}
\newtheorem{theorem}[subsection]{Theorem}
\newtheorem{lemma}[subsection]{Lemma}
\newtheorem{proposition}[subsection]{Proposition}
\newtheorem{corollary}[subsection]{Corollary}
\theoremstyle{definition}
\newtheorem{remark}[subsection]{Remark}
\newcommand{\C}{\ensuremath{\mathbb{C}}}
\newcommand{\V}{\ensuremath{\mathbb{V}}}
\newcommand{\Eq}{ \ensuremath{\mathrm{Eq}} }
\newcommand{\Cong}{ \ensuremath{\mathrm{Cong}} }
\newcommand{\Equiv}{ \ensuremath{\mathrm{Equiv}} }
\newcommand{\mono}[2]{ \ensuremath{ \xymatrix@1@C=15pt{ #1\; \ar@{ >->}[r] & #2 } } }
\newcommand{\regepi}[2]{ \ensuremath{ \xymatrix@1@C=15pt{ #1 \ar@{>>}[r] & #2 } } }
\def\pullback{
	\ar@{-}[]+R+<6pt,-1pt>;[]+RD+<6pt,-6pt>%
	\ar@{-}[]+D+<1pt,-6pt>;[]+RD+<6pt,-6pt>}
\begin{document}
\title[Facets of congruence distributivity in Goursat categories]%
{Facets of congruence distributivity in Goursat categories}
	
\author[Marino Gran]{Marino Gran}
\address{(\textbf{Marino Gran}), Institut de Recherche en Math\'ematique et Physique, Universit\'e Catholique de Louvain, Chemin du Cyclotron 2,
1348 Louvain-la-Neuve, Belgium}
\email{marino.gran@uclouvain.be}

\author[Diana Rodelo]{Diana Rodelo}
\address{(\textbf{Diana Rodelo}), Departamento de Matem\'atica, Faculdade de Ci\^{e}ncias e Tecnologia, Universidade
do Algarve, Campus de Gambelas, 8005-139 Faro, Portugal and CMUC, Department of Mathematics, University of Coimbra, 3001-501 Coimbra, Portugal}
\thanks{The second author acknowledges partial financial assistance by the Centre for Mathematics of the
University of Coimbra -- UID/MAT/00324/2019, funded by the Portuguese Government through
FCT/MEC and co-funded by the European Regional Development Fund through the Partnership
Agreement PT2020.}
\email{drodelo@ualg.pt}
	
\author[Idriss Tchoffo Nguefeu]{Idriss Tchoffo Nguefeu}
\address{(\textbf{Idriss Tchoffo Nguefeu}), Institut de Recherche en Math\'ematique et Physique, Universit\'e Catholique de Louvain, Chemin du Cyclotron 2,
1348 Louvain-la-Neuve, Belgium}
\email{idriss.tchoffo@uclouvain.be}

\keywords{Mal'tsev categories, Goursat categories, congruence modular varieties, congruence distributive varieties, Shifting Lemma, Triangular Lemma, Trapezoid Lemma.}

\subjclass[2010]{
	08C05, 
	08B05, 
	08A30, 
	08B10, 
	18C05, 
	18B99, 
	18E10} 

\begin {abstract}We give new characterisations of regular Mal'tsev categories with distributive lattice of equivalence relations through variations of the so-called \emph{Triangular Lemma} and \emph{Trapezoid Lemma} in universal algebra. We then give new characterisations of equivalence distributive Goursat categories (which extend $3$-permutable varieties) through variations of the Triangular and Trapezoid Lemmas involving reflexive and positive relations.
\end {abstract}

\date{\today}

\maketitle

{\section*{Introduction}}

Regular \emph{Mal'tsev categories} \cite{CLP} extend $2$-permutable varieties of universal algebras, also including many examples which are not necessarily varietal, such as topological groups, compact groups, torsion-free groups and $\mathsf{C}^*$-algebras, for instance.
These categories have the property that any pair of (internal) equivalence relations $R$ and $S$ on the same object permute: $RS = SR$ (see \cite{BGJ}, for instance, and the references therein). It is well known that regular Mal'tsev categories have the property that the
lattice of equivalence relations on any object is modular, so that they satisfy (the categorical version of) Gumm's \emph{Shifting Lemma}~\cite{Gumm}.
More generally, this is the case for \emph{Goursat categories} \cite{CKP}, which are those regular categories for which the composition of equivalence relations on the same object is $3$-permutable: $RSR=SRS$.

In~\cite{GRT} we proved that, for a regular category, the property of being a Mal'tsev category, or of being a Goursat category, can be both characterised through suitable variations of the Shifting Lemma.
These variations considered the Shifting Lemma for relations which were not necessarily equivalence relations, but only reflexive or positive \cite{Sel} ones, thus giving rise to stronger versions of the Shifting Lemma: the main part of those characterisations was to show that these stronger versions implied $2$-permutability or $3$-permutabili\-ty.

There are other properties a variety may possess which can be expressed similarly, as for instance the \emph{distributivity} of the lattice of congruences. These properties are related to the Shifting Lemma, and are called the \emph{Triangular Lemma} and the \emph{Trapezoid Lemma} in the varietal context \cite{Trapezoid}. These properties were first introduced in~\cite{Duda1,Duda2} where the Trapezoid Lemma was called the \emph{Upright Principle}.
This led us to further study the connections between these results and the property, for a regular Mal'tsev (or a Goursat) category, of having distributive equivalence relation lattices on any of its objects.

From~\cite{Trapezoid} we know that, for a variety $\V$ of universal algebras, the fact that both the Shifting Lemma and the Triangular Lemma hold in $\V$ is equivalent to $\V$ being a congruence distributive variety, and is also equivalent to the fact that the Trapezoid Lemma holds in $\V$. Consequently, by considering stronger versions of the Triangular Lemma we were hoping to get at once $2$-permutability (or $3$-permutability) and congruence distributivity in a varietal context, and to extend these observations to a categorical context.

Explaining how this is indeed possible is the main goal of this paper, where suitable variations of the Triangular Lemma and of the Trapezoid Lemma are shown to be the right properties to characterise \emph{equivalence distributive} categories (the natural generalisation of congruence distributive varieties). More precisely, when $\C$ is a regular Mal'tsev category, or even a Goursat category, the Triangular Lemma is equivalent to the Trapezoid Lemma, and both of them are equivalent to $\C$ being equivalence distributive (Propositions~\ref{Mal'tsev and dist} and~\ref{Goursat and dist}). We also give new characterisations of equivalence distributive Mal'tsev categories
through variations of the Triangular Lemma and of the Trapezoid Lemma  (Theorem~\ref{Mal'tsev+dist <=> TTrL}), which then apply to arithmetical varieties~\cite{Pixley} and arithmetical categories~\cite{Pedic}. Inspired by the ternary Pixley term of arithmetical varieties~\cite{Pixley}, we consider a condition for relations, stronger than difunctionality~\cite{Riguet}, which captures the property for a regular category to be a Mal'tsev and equivalence distributive one (Theorem \ref{ternary difunctionality}). In the last section we characterise equivalence distributive Goursat categories (Theorem~\ref{Goursat <=> TTrL}) through variations on the Triangular and Trapezoid Lemmas involving reflexive and positive relations.

\section{Shifting Lemma, Triangular Lemma and Trapezoid Lemma}\label{lemmas}

For a variety $\V$ of universal algebras, Gumm's \emph{Shifting Lemma}~\cite{Gumm} is stated as follows. Given congruences $R,S$ and $T$ on the same algebra $X$ in $\V$ such that $R\wedge S\leqslant T$, whenever $x,y,u,v$ are elements in $X$ with $(x,y)\in R \wedge T$, $(x,u) \in S$, $(y,v)\in S$ and $(u,v)\in R$, it then follows that $(u,v) \in T$. We display this condition as
\begin{equation}\label{SL}
\vcenter{\xymatrix@C=30pt{
		x \ar@{-}[r]^-S \ar@{-}[d]^-R \ar@(l,l)@{-}[d]_-T & u \ar@{-}[d]_-R \ar@(r,r)@{--}[d]^-T \\
		y \ar@{-}[r]_-S  & v. }}
\end{equation}

A variety $\mathbb V$ of universal algebras satisfies the Shifting Lemma precisely when it is congruence modular~\cite{Gumm}, this meaning that the lattice of congruences $\Cong(X)$ on any algebra $X$ in $\mathbb V$ is modular.

A variety $\mathbb V$ of universal algebras satisfies the \emph{Triangular Lemma}~\cite{Trapezoid} if, given congruences $R,S$ and $T$ on the same algebra $X$ in $\mathbb V$ such that $R\wedge S\leqslant T$, whenever $y,u,v$ are elements in $X$ with $(u,y) \in T$, $(y,v)\in S$ and $(u,v)\in R$, it then follows that $(u,v) \in T$. We display this condition as
\begin{equation}\label{TL}
\vcenter{\xymatrix@C=30pt{
		& u \ar@{-}[d]^-R \ar@{-}[dl]_-T \ar@(r,r)@{--}[d]^-T \\
		y \ar@{-}[r]_-S  & v. }}
\end{equation}

A variety $\mathbb V$ of universal algebras satisfies the \emph{Trapezoid Lemma}~\cite{Trapezoid} if, given congruences $R,S$ and $T$ on the same algebra $X$ in $\mathbb V$ such that $R\wedge S\leqslant T$, whenever $x,y,u,v$ are elements in $X$ with $(x,y)\in T$, $(x,u) \in S$, $(y,v)\in S$ and $(u,v)\in R$, it then follows that $(u,v) \in T$. We display this condition as
\begin{equation}\label{TpL}
\vcenter{\xymatrix@C=30pt{
		& x \ar@{-}[dl]_-T \ar@{-}[r]^-S & u \ar@{-}[d]_-R \ar@(r,r)@{--}[d]^-T \\
		y \ar@{-}[rr]_-S  & & v. }}
\end{equation}

If the Trapezoid Lemma holds in a variety, then also the Shifting Lemma and the Triangular Lemma hold, since they are weaker.

A categorical version of the Shifting Lemma (stated differently from the original formulation recalled above) may be considered in any finitely complete category, and this leads to the notion of a \emph{Gumm category}~\cite{BG0, BG1}.
One can easily check that both the properties expressed by the Triangular Lemma and by the Trapezoid Lemma only involve finite limits. It is then possible to speak of the validity of these properties in any finitely complete category.
Nevertheless, since the main results of this paper will be obtained in $3$-permutable (=Goursat) categories \cite{CKP}, we shall need to be able to compose relations. For this reason we shall always require that the base category $\mathbb C$ is regular.\\

Recall that a finitely complete category $\mathbb C$ is \emph{regular} \cite{Barr} if any arrow $f \colon A \rightarrow B$ has a factorisation as a regular epimorphism (=a coequaliser) $p \colon A \rightarrow I$ followed by a monomorphism $m \colon I \rightarrow B$, and these factorisations are pullback stable.
The subobject determined by the monomorphism $m \colon I \rightarrow B$ is unique, and it is called the \emph{regular image} of the arrow $f$.


In a regular category, it is possible to compose relations. If $(R, r_1, r_2)$ is a relation from $X$ to $Y$ and $(S, s_1, s_2)$ a relation from $Y$ to $Z$, their composite $S R$ is a relation from $X$ to $Z$ obtained as the regular image of the arrow $$( r_1 \pi_1, s_2 \pi_2 ) \colon R \times_Y S \rightarrow X \times Z,$$ where $(R \times_Y S, \pi_1, \pi_2)$ is the pullback of $r_2$ along $s_1$. The composition of relations is then associative, thanks to the fact that regular epimorphisms are assumed to be pullback stable.

In a regular category $\C$, given equivalence relations $R,S$ and $T$ on the same object $X$ such that $R\wedge S\leqslant T$, the lemmas recalled above can be interpreted as follows:\\

\vspace{5pt}
\begin{tabular}{rcl}
	\textbf{Shifting Lemma}:  & \hspace{30pt}  $R\wedge S(R\wedge T)S\leqslant T$ \hspace{35pt} & \textbf{(SL)} \vspace{5pt}\\
	\textbf{Triangular Lemma}: & \hspace{30pt}  $R\wedge ST\leqslant T$ \hspace{35pt} & \textbf{(TL)} \vspace{5pt}\\
	\textbf{Trapezoid Lemma}: & \hspace{30pt} $R\wedge STS\leqslant T$ \hspace{35pt} & \textbf{(TpL)}
\end{tabular}
\
\vspace{4mm}

We would like to point out that in some recent papers the notion of \emph{majority category}
has been introduced and investigated \cite{Hoefnagel, Hoef2}. This notion is closely related to the validity of the properties just recalled. For a regular category $\mathbb C$, the property of being a majority category can be equivalently defined as follows (see \cite{Hoef2}): for any reflexive relations $R$, $S$ and $T$ on the same object $X$ in $\mathbb C$, the inequality
$$R \wedge (ST ) \leqslant (R\wedge S) (R\wedge T) $$
holds. We then observe that any regular majority category satisfies the Trapezoid Lemma (and, consequently, also the weaker Triangular Lemma and Shifting Lemma):
\begin{lemma}\label{TrapMaj}\cite{Hoef3}
	The Trapezoid Lemma holds true in any regular majority category $\mathbb C$ .
\end{lemma}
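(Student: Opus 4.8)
The plan is to prove the relational inequality $R \wedge STS \leqslant T$ directly from the majority inequality
\[
R \wedge (AB) \leqslant (R \wedge A)(R \wedge B),
\]
valid for all reflexive relations $R$, $A$, $B$ on the same object, by applying it twice and then using the transitivity of $T$. The key observation is that $STS$ may be read as the composite $S(TS)$ of the reflexive relation $S$ with the reflexive relation $TS$: since $S$ and $T$ are equivalence relations they contain the diagonal, and composites of reflexive relations are again reflexive, so the hypotheses of the majority inequality are met at each step.

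First I would apply the majority inequality to $R$, $S$ and $TS$ using the factorisation $STS = S(TS)$, obtaining
\[
R \wedge STS \leqslant (R \wedge S)(R \wedge TS).
\]
Next, applying the majority inequality again, now to $R$, $T$ and $S$ inside $TS$, gives
\[
R \wedge TS \leqslant (R \wedge T)(R \wedge S).
\]
Since $R \wedge T \leqslant T$ trivially and $R \wedge S \leqslant T$ by hypothesis, the monotonicity of relational composition together with the transitivity of $T$ (which yields $TT = T$ for the equivalence relation $T$) gives $R \wedge TS \leqslant TT = T$. Substituting this back into the first inequality, and again using $R \wedge S \leqslant T$, produces $R \wedge STS \leqslant (R \wedge S)(R \wedge TS) \leqslant TT = T$, which is precisely the Trapezoid Lemma in its relational form.

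I expect the subtle points to be bookkeeping rather than genuine obstacles. One must verify that every relation fed into the majority inequality is reflexive, which holds because $R$, $S$, $T$ are equivalence relations and composites of reflexive relations stay reflexive; and one must legitimately invoke both the monotonicity of composition in a regular category and the identity $TT = T$ for an equivalence relation. The heart of the argument is the recognition that a double application of the majority inequality, combined with the single hypothesis $R \wedge S \leqslant T$, collapses $R \wedge STS$ into a composite of subrelations of $T$, after which transitivity of $T$ closes the estimate.
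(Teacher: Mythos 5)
Your proof is correct and follows essentially the same route as the paper's: both apply the majority inequality twice to the factorisation $STS = S(TS)$, use $R \wedge S \leqslant T$ and $R \wedge T \leqslant T$, and conclude by transitivity of $T$. The only difference is trivial bookkeeping in the order of substitution.
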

\begin{proof}
	Given equivalence relations $R$, $S$ and $T$ on the same object such that $R \wedge S \leqslant T$, then $$R \wedge (STS) \leqslant (R\wedge S) (R \wedge (TS)) \leqslant T (R\wedge T) (R \wedge S) \leqslant TTT = T.$$
\end{proof}

\section{$2$-permutability and $3$-permutability}\label{perm}
A variety $\V$ of universal algebras is \emph{$2$-permutable}~\cite{Smith} when, given any congruences $R$ and $S$ on the same algebra $X$, we have the equality $RS=SR$. Such varieties are characterised by the existence of a ternary operation $p$ such that $p(x,y,y)=x=p(y,y,x)$~\cite{Malcev}. A variety $\V$ of universal algebras is called \emph{$3$-permutable} when the strictly weaker equality $RSR=SRS$ holds. Such varieties are characterised by the existence of two quaternary operations $p$ and $q$ satisfying the identities
\begin{equation}\label{quaternary}
\begin{array}{l}
p(x,y,y,z)=x \\
p(u,u,v,v)=q(u,u,v,v) \\
q(x,y,y,z)=z
\end{array}
\end{equation}
(see~\cite{HM}).

The notions of $2$-permutability and $3$-permutability can be extended from varieties to regular categories by replacing congruences with (internal) equivalence relations, allowing one to explore some interesting new (non-varietal) examples. Regular categories that are $2$-permutable and $3$-permutable are usually called \emph{Mal'tsev categories}~\cite{CLP} and \emph{Goursat categories} \cite{CKP}, respectively. As examples of regular Mal'tsev categories that are not (finitary) varieties of algebras we list: $\mathrm{C}^*$-algebras, compact groups, topological groups~\cite{CKP}, torsion-free abelian groups, reduced commutative rings, cocommutative Hopf algebras over a field~\cite{GSV}, any abelian category, and the dual of any topos~\cite{CKP}. Any regular Mal'tsev category is a Goursat category. As examples of Goursat categories that are not regular Mal'tsev categories we have the category of implication algebras \cite{Mit} and the category of right complemented semigroups \cite{HM}.

It is well-known that any $2$-permutable or $3$-permutable variety is congruence modular~\cite{Gumm, Jonsson53}, thus the Shifting Lemma holds. This result also extends to the regular categorical context. First note that in a regular category $\C$, the preordered set $\Equiv(X)$ of equivalence relations on an object $X$ in $\C$ is just a meet semilattice. If $\C$ is a Mal'tsev or a Goursat category, then the existence of binary joins is guaranteed (see Theorems~\ref{Mal'tsev chars}(iii) and~\ref{Goursat chars}(iii)), so that $\Equiv(X)$ is a lattice which is, moreover, modular~\cite{CKP}. The modularity of the lattices of equivalence relations implies that the Shifting Lemma holds in $\C$. However, the converse fails to be true even in the case of a variety of infinitary algebras, as it was shown in Example 12.5 in~\cite{GJ}.

Regular Mal'tsev and Goursat categories are also characterised by other properties on (equivalence) relations, as follows:

\begin{theorem}\label{Mal'tsev chars} \emph{\cite{CLP}}
	Let $\C$ be a regular category. The following conditions are equivalent:
	\begin{enumerate}
		\item[(i)] $\C$ is a Mal'tsev category;
		\item[(ii)] $\forall R,S\in \Equiv(X)$, $RS\in \Equiv(X)$, for any object $X$ in $\C$;
		\item[(iii)] $\forall R,S\in \Equiv(X)$, $R\vee S=RS (=SR)$, for any object $X$ in $\C$;
		\item[(iv)] any reflexive relation $E$ is symmetric: $E^{\circ}=E$;
		\item[(v)] any relation $D$ is difunctional: $DD^{\circ}D=D$.
		
	\end{enumerate}
\end{theorem}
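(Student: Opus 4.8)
The plan is to prove the five conditions equivalent by establishing the cyclic chain $(\mathrm{i})\Rightarrow(\mathrm{ii})\Rightarrow(\mathrm{iii})\Rightarrow(\mathrm{v})\Rightarrow(\mathrm{iv})\Rightarrow(\mathrm{i})$, working throughout in the calculus of relations available in a regular category. I would rely only on the formal properties of this calculus: composition is associative and monotone, converse satisfies $(SR)^{\circ}=R^{\circ}S^{\circ}$, and for a morphism $f$ (viewed as a functional relation) one has $ff^{\circ}\leqslant\Delta$ and $\Delta\leqslant f^{\circ}f=\Eq(f)$. This lets me avoid any appeal to an element-chasing metatheorem. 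The three conditions $(\mathrm{i})$, $(\mathrm{ii})$, $(\mathrm{iii})$ should be interchangeable by purely formal manipulation. For $(\mathrm{i})\Rightarrow(\mathrm{ii})$: from $RS=SR$ the composite $RS$ is reflexive (since $\Delta\leqslant R$ and $\Delta\leqslant S$ give $\Delta\leqslant RS$), symmetric (as $(RS)^{\circ}=S^{\circ}R^{\circ}=SR=RS$), and transitive ($RSRS=R(SR)S=R(RS)S=RS$ by idempotency of equivalence relations), hence lies in $\Equiv(X)$. For $(\mathrm{ii})\Rightarrow(\mathrm{iii})$: the equivalence relation $RS$ contains both $R$ and $S$, and any $W\in\Equiv(X)$ with $R,S\leqslant W$ satisfies $RS\leqslant WW=W$, so $RS=R\vee S$; its symmetry returns $SR=(RS)^{\circ}=RS$.

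The crux of the argument is $(\mathrm{iii})\Rightarrow(\mathrm{v})$. I would tabulate an arbitrary relation $D$ from $X$ to $Y$ by the two projections $d_1\colon D\to X$ and $d_2\colon D\to Y$ of the mono $\langle d_1,d_2\rangle\colon D\rightarrowtail X\times Y$, so that $D=d_2 d_1^{\circ}$ and $D^{\circ}=d_1 d_2^{\circ}$. Setting $R=d_1^{\circ}d_1=\Eq(d_1)$ and $S=d_2^{\circ}d_2=\Eq(d_2)$, the reflexivity $\Delta_D\leqslant R,S$ already yields the inclusion $D=d_2\Delta_D d_1^{\circ}\leqslant d_2 RS d_1^{\circ}=DD^{\circ}D$ with no hypothesis at all. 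For the reverse inclusion I would regroup
\[ DD^{\circ}D=(d_2 d_1^{\circ})(d_1 d_2^{\circ})(d_2 d_1^{\circ})=d_2\,(d_1^{\circ}d_1)(d_2^{\circ}d_2)\,d_1^{\circ}=d_2\,(RS)\,d_1^{\circ}, \]
then invoke the permutability $RS=SR$ supplied by $(\mathrm{iii})$ and regroup once more, using functionality of the projections:
\[ d_2\,(SR)\,d_1^{\circ}=(d_2 d_2^{\circ})\,(d_2 d_1^{\circ})\,(d_1 d_1^{\circ})\leqslant\Delta_Y\,D\,\Delta_X=D. \]
Hence $DD^{\circ}D=D$. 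This single insertion of permutability inside the tabulation is where the actual content lies, and it is the step I expect to be the main obstacle: one has to recognise that difunctionality of an arbitrary relation is precisely the permutability of the two kernel-pair equivalence relations $\Eq(d_1)$, $\Eq(d_2)$ on its tabulating object $D$, so that $2$-permutability of $\C$ (applied to the single object $D$) is exactly what is needed.

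The remaining two implications are short. For $(\mathrm{v})\Rightarrow(\mathrm{iv})$, let $E$ be reflexive, so $\Delta\leqslant E$; difunctionality then gives $E^{\circ}=\Delta\,E^{\circ}\,\Delta\leqslant EE^{\circ}E=E$, and applying the same to $E^{\circ}$ (also reflexive) yields $E\leqslant E^{\circ}$, whence $E=E^{\circ}$. For $(\mathrm{iv})\Rightarrow(\mathrm{i})$, given $R,S\in\Equiv(X)$ the composite $RS$ is reflexive, hence symmetric by $(\mathrm{iv})$, so $RS=(RS)^{\circ}=S^{\circ}R^{\circ}=SR$, i.e. $\C$ is a Mal'tsev category. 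This closes the cycle and establishes the equivalence of all five conditions. Beyond the difunctionality step, the only points requiring care are the standard allegory identities $ff^{\circ}\leqslant\Delta$ and $f^{\circ}f=\Eq(f)$ for morphisms, which underpin both the tabulation $D=d_2 d_1^{\circ}$ and the final inequality.
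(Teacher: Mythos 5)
Your proof is correct. The paper states this theorem only as a cited result from \cite{CLP} and gives no proof of its own; your cyclic argument $(\mathrm{i})\Rightarrow(\mathrm{ii})\Rightarrow(\mathrm{iii})\Rightarrow(\mathrm{v})\Rightarrow(\mathrm{iv})\Rightarrow(\mathrm{i})$ is essentially the classical one from that reference, and you correctly identify the one genuinely non-formal step, namely that difunctionality of $D$ is exactly permutability of the kernel pairs $\Eq(d_1)$ and $\Eq(d_2)$ on the tabulating object, with all the remaining implications following from the standard relational calculus.
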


\begin{theorem}\label{Goursat chars} \emph{\cite{CKP}}
	Let $\C$ be a regular category. The following conditions are equivalent:
	\begin{enumerate}
		\item[(i)] $\C$ is a Goursat category;
		\item[(ii)] $\forall R,S\in \Equiv(X)$, $RSR\in \Equiv(X)$, for any object $X$ in $\C$;
		\item[(iii)] $\forall R,S\in \Equiv(X)$, $R\vee S=RSR (=SRS)$, for any object $X$ in $\C$;
		\item[(iv)] any relation $P$ is such that $PP^{\circ}PP^{\circ}=PP^{\circ}$;
		\item[(v)] any reflexive relation $E$ is such that $EE^{\circ}=E^{\circ}E$.
	\end{enumerate}
\end{theorem}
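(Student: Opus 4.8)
The plan is to run a single cycle of implications through all five conditions, proving $(i)\Rightarrow(ii)\Rightarrow(iii)\Rightarrow(i)$ first and then grafting $(iv)$ and $(v)$ onto it. For $(i)\Rightarrow(ii)$ I would, given $R,S\in\Equiv(X)$, verify that $RSR$ is again an equivalence relation: reflexivity follows from $\Delta\leqslant R,S$, symmetry from $(RSR)^\circ=R^\circ S^\circ R^\circ=RSR$, and transitivity from the computation
\[
(RSR)(RSR)=RS(RR)SR=R(SRS)R=R(RSR)R=(RR)S(RR)=RSR,
\]
which uses $RR=R$ together with the Goursat identity $SRS=RSR$. For $(ii)\Rightarrow(iii)$ I would observe that $RSR\geqslant R\Delta R=R$ and $RSR\geqslant\Delta S\Delta=S$, so that the equivalence relation $RSR$ (equivalence by $(ii)$) contains both $R$ and $S$ and hence coincides with the smallest such equivalence relation $R\vee S$; the same reasoning gives $SRS=R\vee S$, whence $(iii)$. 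Finally $(iii)\Rightarrow(i)$ is immediate, as $(iii)$ already asserts $RSR=SRS$.

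To bring in the remaining two conditions I would first prove $(v)\Rightarrow(i)$, which is the cleanest link: for $R,S\in\Equiv(X)$ the composite $E:=RS$ is reflexive with $E^\circ=SR$, so $(v)$ applied to $E$ reads
\[
RSR=RSSR=EE^\circ=E^\circ E=SRRS=SRS,
\]
i.e. exactly $3$-permutability. For $(iv)\Rightarrow(v)$, let $E$ be reflexive. Applying $(iv)$ to $P=E$ shows $EE^\circ EE^\circ=EE^\circ$, so the symmetric relation $EE^\circ$ is transitive, and being reflexive (as $E$ is) it is an equivalence relation; since it contains both $E$ and $E^\circ$ it must equal the equivalence relation generated by $E$. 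Applying $(iv)$ to $P=E^\circ$ shows in the same way that $E^\circ E$ is this same generated equivalence relation, so $EE^\circ=E^\circ E$.

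What then remains is to feed the cycle back into $(iv)$, and $(ii)\Rightarrow(iv)$ is the step I expect to be the main obstacle, being the only one that really exploits the ambient regular structure rather than pure relational calculus. Presenting the relation $P$ as a jointly monic span $X\xleftarrow{d}P\xrightarrow{c}Y$ and writing $\bar c$ for the graph of $c$, I would record that $PP^\circ=\bar c\,\Eq(d)\,\bar c^\circ$ and that $\bar c^\circ\bar c=\Eq(c)$; setting $K=\Eq(d)$ and $L=\Eq(c)$ this gives
\[
PP^\circ PP^\circ=\bar c\,K(\bar c^\circ\bar c)K\,\bar c^\circ=\bar c\,(KLK)\,\bar c^\circ.
\]
Here the already-established Goursat identity $KLK=LKL$ for the kernel pairs $K,L$ is essential: it lets me replace $KLK$ by $LKL$ and then absorb the outer factors using $\bar c\,L=\bar c$ and $L\,\bar c^\circ=\bar c^\circ$ (valid because $\bar c\bar c^\circ\leqslant\Delta$, since $\bar c$ is the graph of a morphism), yielding $\bar c\,K\,\bar c^\circ=PP^\circ$ and hence $(iv)$. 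The delicate points are the correct relational expression of $PP^\circ$ through the kernel pairs of the two projections of $P$ and the careful bookkeeping of the composites, both resting on the associativity and pullback-stability of relation composition in $\C$.
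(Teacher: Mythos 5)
The paper states this theorem purely as a quotation of~\cite{CKP} and contains no proof of its own, so there is nothing internal to compare against; your argument is correct and follows the same classical route as the cited source, including the decisive tabulation of $P$ through the kernel pairs $\Eq(d)$ and $\Eq(c)$ of its two legs for (ii)$\,\Rightarrow\,$(iv), with all steps legitimised elementwise by Barr's embedding as the paper itself does elsewhere. The only half-sentences worth adding are that in (ii)$\,\Rightarrow\,$(iii) one should also record $RSR\leqslant TTT=T$ for every equivalence relation $T$ containing $R$ and $S$ (so that $RSR$ is indeed the \emph{least} such, which is what makes it the join), and that the absorptions $\bar c\,\Eq(c)=\bar c$ and $\Eq(c)\,\bar c^{\circ}=\bar c^{\circ}$ use the reflexivity $\Delta\leqslant\bar c^{\circ}\bar c$ as well as $\bar c\,\bar c^{\circ}\leqslant\Delta$ --- although for (iv) the inequality $PP^{\circ}PP^{\circ}\leqslant PP^{\circ}$ alone would suffice, since $PP^{\circ}\leqslant PP^{\circ}PP^{\circ}$ holds for any relation $P$.
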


\section{Equivalence distributivity}\label{dist}
A lattice $L$ is called \emph{distributive} when
$$
a\wedge (b\vee c) = (a\wedge b) \vee (a\wedge c), \forall a,b,c \in L.
$$
Equivalently, $L$ is distributive if and only if it satisfies the Horn sentence
\begin{equation}\label{equivalent to dist}
a\wedge b \leqslant c \Rightarrow a\wedge (b\vee c)\leqslant c.
\end{equation}
A variety $\V$ of universal algebras is called \emph{congruence distributive} when the lattice $\Cong(X)$ of congruences on any algebra $X$ in $\V$ is distributive. Similarly, we shall call a regular category $\C$ \emph{equivalence distributive} when the meet semilattice $\Equiv(X)$ of equivalence relations is a distributive lattice, for all objects $X$ in $\C$.

Any distributive variety gives an example of an equivalence distributive category. The varieties of Boolean algebras, Heyting algebras and Von Neumann regular rings \cite{GR}, or the dual of any (pre)topos are also examples. These are actually \emph{arithmetical categories}~\cite{Pedic}, i.e. Barr-exact Mal'tsev equivalence distributive categories. Recall that a Barr-exact category $\mathbb C$ is a regular category where any equivalence relation in $\mathbb C$ is effective, i.e. the kernel pair of some arrow \cite{Barr}.

The congruence distributive varieties can be characterised as follows:

\begin{theorem}~\cite{Trapezoid}\label{Sl+TL=TpL} Let $\V$ be a variety of universal algebras. The following conditions are equivalent:
	\begin{itemize}
		\item[(i)] $\V$ is congruence distributive;
		\item[(ii)] the Trapezoid Lemma holds in $\V$;
		\item[(iii)] the Shifting Lemma and the Triangular Lemma hold in $\V$.
	\end{itemize}
\end{theorem}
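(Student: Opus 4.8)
The natural plan is to establish the cycle (i) $\Rightarrow$ (ii) $\Rightarrow$ (iii) $\Rightarrow$ (i), working throughout with the relational reformulations (SL), (TL), (TpL) recalled above together with the Horn-sentence form~(\ref{equivalent to dist}) of distributivity. For (i) $\Rightarrow$ (ii), assume $\V$ is congruence distributive and let $R,S,T$ be congruences on an algebra $X$ with $R\wedge S\leqslant T$. Since $S$ and $T$ both lie below their join, one has $STS\leqslant S\vee T$, whence $R\wedge STS\leqslant R\wedge(S\vee T)\leqslant T$, the last inequality being exactly~(\ref{equivalent to dist}) applied to $a=R$, $b=S$, $c=T$. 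This is the Trapezoid Lemma.

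The implication (ii) $\Rightarrow$ (iii) is the easy weakening already noted in the text: both (TL) and (SL) are special cases of (TpL), obtained by monotonicity of relational composition. Indeed, reflexivity of $S$ gives $ST\leqslant STS$, so (TpL) yields $R\wedge ST\leqslant R\wedge STS\leqslant T$, which is (TL); and $R\wedge T\leqslant T$ gives $S(R\wedge T)S\leqslant STS$, so (TpL) yields $R\wedge S(R\wedge T)S\leqslant R\wedge STS\leqslant T$, which is (SL).

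The genuine content lies in (iii) $\Rightarrow$ (i). I would reduce congruence distributivity to the Horn sentence~(\ref{equivalent to dist}): fixing $R,S,T$ with $R\wedge S\leqslant T$, it suffices to prove $R\wedge(S\vee T)\leqslant T$. As $S$ and $T$ are congruences, their join is the directed union of alternating composites, $S\vee T=\bigcup_{n\geqslant 1}(ST)^{n}$, and since meets distribute over directed unions of relations it is enough to prove, by induction on $n$, that $R\wedge(ST)^{n}\leqslant T$. The base case $n=1$ is precisely the Triangular Lemma. In the inductive step one wants to use the Shifting Lemma to fold the last rung of an $(S,T)$-alternating path back into $T$, thereby reducing the number of alternations so that the induction hypothesis applies.

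I expect this inductive step to be the main obstacle. The trouble is that the hypothesis $(a,b)\in R$ constrains only the two endpoints of the path, whereas applying (SL) to the final rung — say to a path whose last two steps are $(p,q)\in T$ and $(q,b)\in S$, with $(p,a)\in S$ — would require the intermediate step $(p,q)$ to be not merely in $T$ but also in $R$, which is not available a priori. This is exactly where one must leave pure relation-algebra and exploit the algebraic structure of the variety: using the ternary (Gumm-type) terms furnished by the validity of the Shifting Lemma, that is, by congruence modularity, to \emph{correct} the intermediate elements into $R$-related ones while preserving their $S$- and $T$-relations to the endpoints, and only then closing the rung via (SL). Equivalently, one may route the entire implication (iii) $\Rightarrow$ (i) through J\'onsson's term characterisation of congruence distributivity, assembling the J\'onsson terms from the modularity terms supplied by (SL) together with the identity encoded by (TL); the verification of the J\'onsson identities on the free algebra on three generators would then be the technical heart of the argument.
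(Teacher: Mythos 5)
You should first be aware that the paper does not prove Theorem~\ref{Sl+TL=TpL} at all: it is imported verbatim from \cite{Trapezoid}, so there is no in-paper argument to compare against. That said, your implications (i)~$\Rightarrow$~(ii) and (ii)~$\Rightarrow$~(iii) are correct and are exactly the formal relational computations the paper uses elsewhere (the remark following diagram~\eqref{TpL}, and the step (iii)~$\Rightarrow$~(ii) in Propositions~\ref{Mal'tsev and dist} and~\ref{Goursat and dist}): $STS\leqslant S\vee T$ together with the Horn sentence~\eqref{equivalent to dist} gives the first, and the inclusions $ST\leqslant STS$ and $S(R\wedge T)S\leqslant STS$ give the second.

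The genuine gap is (iii)~$\Rightarrow$~(i), which is the entire nontrivial content of the theorem, and your text does not prove it. The reduction to showing $R\wedge (ST)^{n}\leqslant T$ by induction on $n$ is legitimate (the join of two congruences is the directed union of the alternating composites, and meets distribute over directed unions of relations), and the base case is precisely \textbf{(TL)}. But you then correctly diagnose that the inductive step cannot be closed by the Shifting Lemma alone, since the rung one would like to shift lies in $T$ but not, a priori, in $R$ --- and at that point the proof is replaced by two announcements: either ``correct the intermediate elements using Gumm-type terms'' or ``derive the J\'onsson terms on the free algebra on three generators from the Day/Gumm terms supplied by \textbf{(SL)} together with \textbf{(TL)}''. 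Neither strategy is carried out, and neither is routine: this term manipulation is exactly the technical heart of the Chajda--Cz\'edli--Horv\'ath argument, and it is the same kind of explicit computation that the present paper does perform, with quaternary terms, in the proof of Lemma~\ref{3-perm + TL => TpL} for the $3$-permutable case. As it stands, your proposal establishes only (i)~$\Rightarrow$~(ii)~$\Rightarrow$~(iii); the converse direction remains a declared plan rather than a proof, so the argument is incomplete.
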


The equivalence between the Triangular Lemma and Trapezoid Lemma holds for any algebra $X$ which is \emph{congruence permutable}, meaning that $2$-permutability holds in $\Cong(X)$:

\begin{proposition}~\cite{Trapezoid}\label{TL<=>TpL, for Cong(X)} Let $\V$ be a variety of universal algebras and $X$ a congruence permutable algebra. The following conditions are equivalent:
	\begin{itemize}
		\item[(i)] the Triangular Lemma holds for $X$;
		\item[(ii)] the Trapezoid Lemma holds for $X$;
		\item[(iii)] $\Cong(X)$ is distributive.
	\end{itemize}
\end{proposition}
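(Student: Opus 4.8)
The plan is to reduce all three conditions to the single Horn sentence of \eqref{equivalent to dist}, namely the implication $R\wedge S\leqslant T \Rightarrow R\wedge(S\vee T)\leqslant T$, by exploiting the fact that congruence permutability collapses the composites $ST$, $TS$ and $STS$ all to the join $S\vee T$. Once this collapse is available, both the Triangular Lemma and the Trapezoid Lemma become verbatim the defining Horn sentence for distributivity.

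The first step is the key computational fact: for any congruences $S,T$ on the congruence permutable algebra $X$,
$$ ST = TS = STS = S\vee T. $$
The equality $ST=TS$ is exactly $2$-permutability in $\Cong(X)$. This makes $ST$ a reflexive, symmetric and transitive subalgebra of $X\times X$ (symmetry from $(ST)^{\circ}=T^{\circ}S^{\circ}=TS=ST$, and transitivity from $STST = S(TS)T = S(ST)T = (SS)(TT)=ST$), hence a congruence, so that $ST = S\vee T$. For the remaining equality, reflexivity of $S$ gives $ST\leqslant STS$, while transitivity of the congruence $S\vee T$ gives $STS \leqslant (S\vee T)(S\vee T)(S\vee T) = S\vee T = ST$; hence $STS = ST = S\vee T$.

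With this in hand, each of the three conditions unfolds to the same implication. By \eqref{equivalent to dist}, $\Cong(X)$ is distributive (iii) precisely when, for all congruences $R,S,T$, one has $R\wedge S\leqslant T \Rightarrow R\wedge(S\vee T)\leqslant T$. The Triangular Lemma for $X$ is the implication $R\wedge S\leqslant T \Rightarrow R\wedge ST\leqslant T$, which by $ST = S\vee T$ is exactly this sentence, giving (i)$\Leftrightarrow$(iii). Likewise the Trapezoid Lemma for $X$ is $R\wedge S\leqslant T \Rightarrow R\wedge STS\leqslant T$, which by $STS = S\vee T$ is again the very same sentence, giving (ii)$\Leftrightarrow$(iii). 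Chaining these yields the full equivalence.

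I expect the only delicate point to be the chain $ST\leqslant STS\leqslant S\vee T$, which relies on the reflexivity of $S$ together with the transitivity of the join $S\vee T$; everything else is a direct translation of the relational forms of the two lemmas. As a sanity check one recovers the implication (ii)$\Rightarrow$(i) \emph{without} invoking permutability, since $ST\leqslant STS$ already shows that the Triangular Lemma is a weakening of the Trapezoid Lemma, matching the general observation made earlier in the excerpt.
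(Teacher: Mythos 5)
Your proof is correct. Note first that the paper does not actually prove this proposition: it is quoted from the reference on the Trapezoid Lemma, and the paper instead proves the categorical analogue for regular Mal'tsev categories (Proposition~\ref{Mal'tsev and dist}). Compared with that proof, your route is a genuine streamlining. The paper establishes (i) $\Rightarrow$ (ii) by an element-chasing argument: it applies the Triangular Lemma once to a triangle whose hypotenuse carries the composite relation $TS$ (using $R\wedge S\leqslant T\leqslant TS$), extracts a witness $a$ with $u\,T\,a\,S\,v$, and then applies the Triangular Lemma a second time; the equivalences with (iii) are then handled by the lattice identities $S\vee T=ST$ and $STS\leqslant S\vee T$. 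You instead collapse everything at once: the computation $ST=TS=STS=S\vee T$ (congruence permutability making $ST$ a congruence, reflexivity giving $ST\leqslant STS$, and transitivity of the join giving $STS\leqslant S\vee T$) turns the relational forms $R\wedge ST\leqslant T$ and $R\wedge STS\leqslant T$ of the two lemmas into literally the same Horn sentence $R\wedge(S\vee T)\leqslant T$ characterising distributivity, so all three conditions coincide syntactically and no element-chasing or double application of the Triangular Lemma is needed. Each ingredient you use ($(ST)^{\circ}=ST$, $STST=ST$, hence $ST=S\vee T$) is sound, and your closing observation that (ii) $\Rightarrow$ (i) needs no permutability, only $ST\leqslant STS$, matches the paper's remark that the Trapezoid Lemma is the stronger of the two. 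What the paper's longer argument buys is that it transfers almost verbatim to the categorical setting of Propositions~\ref{Mal'tsev and dist} and~\ref{Goursat and dist}, where one works with internal equivalence relations via Barr's embedding; your argument is the cleaner one in the purely varietal, congruence-permutable situation at hand.
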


This result may be extended to the context of regular categories. To do so we apply Barr's Theorem~\cite{Barr} which allows us to use part of the internal logic of a topos to develop proofs in a regular category. In particular, finite limits can be described elementwise as in the category of sets and regular epimorphisms via the usual formula describing surjections (see also Metatheorem A.5.7 in~\cite{BB}).

\begin{proposition}\label{Mal'tsev and dist}
	Let $\C$ be a regular Mal'tsev category. The following conditions are equivalent:
	\begin{enumerate}
		\item[(i)] the Triangular Lemma holds in $\C$;
		\item[(ii)] the Trapezoid Lemma holds in $\C$;
		\item[(iii)] $\C$ is equivalence distributive.
	\end{enumerate}
\end{proposition}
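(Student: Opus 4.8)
The plan is to exploit $2$-permutability to rewrite the relational composites occurring in the Triangular and Trapezoid Lemmas as binary joins, thereby collapsing both lemmas to the single Horn condition \eqref{equivalent to dist} that characterises distributivity. All the relational manipulations below can be read elementwise, as justified by Barr's Theorem.

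First I would record the two identities on which everything rests. Since $\C$ is a regular Mal'tsev category, Theorem~\ref{Mal'tsev chars}(iii) gives, for equivalence relations $S$ and $T$ on an object $X$, the equality $S \vee T = ST = TS$; in particular $ST$ is again an equivalence relation and the join $S\vee T$ exists, so that $\Equiv(X)$ is a lattice. Using this together with the idempotency $SS = S$ of an equivalence relation (which follows from reflexivity and transitivity), one computes
$$ STS = S(TS) = S(ST) = (SS)T = ST = S \vee T. $$
Thus both the composite $ST$ appearing in the Triangular Lemma and the composite $STS$ appearing in the Trapezoid Lemma coincide with $S \vee T$.

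Consequently, under the standing hypothesis $R \wedge S \leqslant T$, the Triangular Lemma $R \wedge ST \leqslant T$ and the Trapezoid Lemma $R \wedge STS \leqslant T$ become one and the same inequality $R \wedge (S \vee T) \leqslant T$. This is precisely the conclusion of the Horn sentence \eqref{equivalent to dist} with $a = R$, $b = S$ and $c = T$. Since the elements of $\Equiv(X)$ are exactly the equivalence relations on $X$, requiring this implication for all $R, S, T \in \Equiv(X)$ and all objects $X$ is, by \eqref{equivalent to dist}, equivalent to the distributivity of every $\Equiv(X)$, that is, to equivalence distributivity. This simultaneously yields (i)$\,\Leftrightarrow\,$(iii) and (ii)$\,\Leftrightarrow\,$(iii), whence the three conditions are equivalent.

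I do not expect a serious obstacle here: once permutability is in hand the whole argument is a short relational calculation. The only points demanding care are verifying that the composites genuinely simplify to joins in the internal (rather than merely set-theoretic) sense — handled uniformly by the elementwise reading via Barr's Theorem — and keeping the hypothesis $R \wedge S \leqslant T$ fixed throughout, so that \eqref{equivalent to dist} may be invoked cleanly in both directions.
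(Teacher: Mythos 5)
Your argument is correct. The identity $STS = S(TS) = S(ST) = (SS)T = ST = S \vee T$ is valid in a regular Mal'tsev category (associativity of relational composition, permutability from Theorem~\ref{Mal'tsev chars}(iii), and idempotency of equivalence relations), and once both composites are identified with the join, the Triangular and Trapezoid Lemmas literally become the Horn sentence \eqref{equivalent to dist}, so all three conditions coincide. This is the same mechanism the paper uses for (ii)$\,\Leftrightarrow\,$(iii) --- there the computation $R\wedge(S\vee T) = R\wedge ST \leqslant R\wedge STS \leqslant T$ appears explicitly --- but your treatment of (i)$\,\Leftrightarrow\,$(ii) is genuinely more economical: the paper proves (i)$\,\Rightarrow\,$(ii) by an elementwise chase, first applying the Triangular Lemma with the equivalence relation $TS$ in place of $T$ to extract a witness $a$ with $u\,T\,a\,S\,v$, and then applying the Triangular Lemma a second time, whereas you bypass the element chase entirely by observing that the two lemmas are the same inequality. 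What the paper's longer route buys is a template that survives outside the Mal'tsev setting: the two-step application of the Triangular Lemma is the pattern that gets adapted (via the matrix method and quaternary terms) to prove the analogous Proposition~\ref{Goursat and dist} for Goursat categories, where $ST \neq S\vee T$ and your collapse is unavailable. Your uniform relational argument is the right proof for the Mal'tsev case, but it does not generalise to $3$-permutability, which is presumably why the authors chose the more hands-on version.
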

\begin{proof}
	(i) $\Rightarrow$ (ii) Let $R,S$ and $T$ be equivalence relations on an object $X$ such that $R\wedge S\leqslant T$ and suppose that $x,y,u,v$ are related as in \eqref{TpL}. Since $\C$ is a Mal'tsev category, then $TS$ is an equivalence on $X$ (Theorem~\ref{Mal'tsev chars}(ii)). We may apply the Triangular Lemma to
	$$
	\xymatrix{ & u \ar@{-}[dl]_-{TS} \ar@{-}[d]^-R \ar@(r,r)@{--}[d]^-{TS} \\
		y \ar@{-}[r]_-S & v}
	$$
	($R\wedge S\leqslant T\leqslant TS$), to conclude that $(u,v)\in TS (=ST)$. So, there exists $a$ in $X$ such that
	$$
	\xymatrix{ & u \ar@{-}[dl]_-T \ar@{-}[d]^-R \ar@(r,r)@{--}[d]^-T \\
		a \ar@{-}[r]_-S & v.}
	$$
	Applying the Triangular Lemma again, we conclude that $(u,v)\in T$.
	
	(ii) $\Rightarrow$ (iii) We prove that \eqref{equivalent to dist} holds with respect to the lattice $\Equiv(X)$ of equivalence relations on an object $X$. Let $R,S,T\in \Equiv(X)$ be such that $R\wedge S\leqslant T$. Then
	$$
	\begin{array}{lcll}
	R\wedge (S\vee T) & = & R\wedge ST, & \mathrm{by\;\;} \mathrm{Theorem\;\;}\ref{Mal'tsev chars}\mathrm{(iii)} \\
	& \leqslant & R\wedge STS \\
	& \leqslant & T, & \mathrm{by\;\;} \mathrm{\textbf{(TpL)}}.
	\end{array}
	$$
	(iii) $\Rightarrow$ (ii) Let $R,S$ and $T$ be equivalence relations in $\Equiv(X)$ such that $R\wedge S\leqslant T$. Then
	$$
	\begin{array}{lcll}
	R\wedge STS & \leqslant & R\wedge (S\vee T)\\
	& \leqslant & T, & \mathrm{by\;\;}\eqref{equivalent to dist} \\
	\end{array}
	$$
	thus \textbf{(TpL)} holds.
	
	(ii) $\Rightarrow$ (i) Obvious.
\end{proof}

Note that the implications (iii) $\Rightarrow$ (ii) $\Rightarrow$ (i) of Proposition~\ref{Mal'tsev and dist} hold in any regular category.

\begin{remark}
	It is known from Corollary 3.2 in~\cite{{Hoef2}} that a regular Mal'tsev category $\C$ is equivalence distributive if and only if $\C$ is  a majority category. That every Mal'tsev equivalence distributive category is a majority category was already known from~\cite{Hoefnagel}. We remark that the converse implication also easily follows from Lemma~\ref{TrapMaj} and Proposition~\ref{Mal'tsev and dist}.
\end{remark}

Next we show that the same equivalent conditions hold in the weaker context of Goursat categories. The most difficult implication to prove is that a Goursat category which satisfies the Triangular Lemma also satisfies the Trapezoid Lemma. We start by giving a direct proof of this fact in the varietal context to then obtain a categorical translation of the proof via matrix conditions~\cite{JRVdL}. Note that, for varieties, this result actually follows from Theorem 1 in~\cite{Trapezoid}; however, we give an alternative proof which is suitable to be extended to the categorical context of regular categories.

\begin{lemma}\label{3-perm + TL => TpL} If $\V$ is a $3$-permutable variety which satisfies the Triangular Lemma, then the Trapezoid Lemma also holds in $\V$.
\end{lemma}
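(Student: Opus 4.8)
The plan is to give a direct Mal'cev-style computation that exploits the two quaternary terms $p$ and $q$ of~\eqref{quaternary} in order to reduce the Trapezoid configuration~\eqref{TpL} to two instances of the Triangular configuration~\eqref{TL}. So I would fix congruences $R,S,T$ on $X$ with $R\wedge S\leqslant T$ and elements $x,y,u,v$ related as in~\eqref{TpL}, i.e. $(x,y)\in T$, $(x,u)\in S$, $(y,v)\in S$ and $(u,v)\in R$, the goal being $(u,v)\in T$. The central device is a single \emph{pivot} element $w:=p(u,u,v,v)=q(u,u,v,v)$, where the equality is precisely the middle identity of~\eqref{quaternary}; it is this coincidence of the $p$- and $q$-values that will let me glue the two halves of the argument together at the end.

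First I would record how $w$ sits with respect to $R$: replacing the last two entries of $p(u,u,v,v)$ by $u$ (using $(u,v)\in R$) and then applying $p(x,y,y,z)=x$ gives $(u,w)\in R$, while dually replacing the first two entries of $q(u,u,v,v)$ by $v$ and applying $q(x,y,y,z)=z$ gives $(v,w)\in R$. Next I would introduce the auxiliary elements $a:=p(u,x,y,v)$ and $b:=q(u,x,y,v)$. Feeding coordinatewise related tuples into $p$ and $q$ (pure congruence bookkeeping) yields, on the $T$-side, $a\mathrel{T}p(u,x,x,v)=u$ and $b\mathrel{T}q(u,y,y,v)=v$ (using $(x,y)\in T$ together with the outer identities of~\eqref{quaternary}), so that $(u,a)\in T$ and $(b,v)\in T$; and on the $S$-side, $a\mathrel{S}p(u,u,v,v)=w$ and $b\mathrel{S}q(u,u,v,v)=w$ (using $(x,u),(y,v)\in S$), so that $(a,w)\in S$ and $(b,w)\in S$.

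With these relations in hand the Triangular Lemma can be applied twice. The triple $(u,a,w)$ satisfies $(u,w)\in R$, $(u,a)\in T$ and $(a,w)\in S$, which is exactly the configuration~\eqref{TL} with $w$ in the role of $v$ and $a$ in the role of $y$; since $R\wedge S\leqslant T$, the Triangular Lemma gives $(u,w)\in T$. Symmetrically, the triple $(v,b,w)$ satisfies $(v,w)\in R$, $(v,b)\in T$ and $(b,w)\in S$, so the Triangular Lemma gives $(v,w)\in T$. Finally, transitivity of $T$ applied to $(u,w)\in T$ and $(w,v)\in T$ yields $(u,v)\in T$, which is the Trapezoid Lemma.

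The one genuinely non-routine step is the very choice of $w$, $a$ and $b$; everything else is routine manipulation with the identities~\eqref{quaternary}. The point I would emphasise — and the reason $3$-permutability is indispensable — is that $w$ must be \emph{simultaneously} $R$-related to both endpoints $u$ and $v$ and reachable by $S$ from both $a$ and $b$: the outer identities $p(x,y,y,z)=x$ and $q(x,y,y,z)=z$ supply the two $T$-legs and the two $R$-legs, while the middle identity $p(u,u,v,v)=q(u,u,v,v)$ is exactly what makes the $p$-pivot and the $q$-pivot coincide, so that the two Triangular-Lemma conclusions $(u,w)\in T$ and $(w,v)\in T$ really meet at the same $w$ and compose. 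I would also arrange the computation so that it reads as a matrix condition in the sense of~\cite{JRVdL}, since this is what will allow the later categorical generalisation to regular Goursat categories.
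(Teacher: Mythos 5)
Your proof is correct: all the congruence computations check out ($p(u,u,v,v)\mathrel{R}p(u,u,u,u)=u$ and $q(u,u,v,v)\mathrel{R}q(v,v,v,v)=v$ give the two $R$-legs to the pivot $w$; $p(u,x,y,v)\mathrel{T}p(u,x,x,v)=u$, $q(u,x,y,v)\mathrel{T}q(u,y,y,v)=v$ and $p(u,x,y,v)\mathrel{S}p(u,u,v,v)=w=q(u,u,v,v)\mathrel{S}q(u,x,y,v)$ set up the two triangles correctly), and both applications of the Triangular Lemma use the given hypothesis $R\wedge S\leqslant T$ as stated in \eqref{TL}. However, your route is genuinely different from the paper's. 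The paper applies the Triangular Lemma to two triangles anchored at $x$ and $y$ (with bottom vertices $p(x,x,y,y),p(x,u,v,y)$ and $q(x,x,y,y),q(x,u,v,y)$) and then needs \emph{two further applications of the Shifting Lemma} to transport the resulting $T$-relations from $x,y$ over to $u,v$ before gluing at the common value $p(u,u,v,v)=q(u,u,v,v)$. Your choice of auxiliary elements $a=p(u,x,y,v)$ and $b=q(u,x,y,v)$ anchors the triangles directly at $u$ and $v$, so the Shifting Lemma (i.e.\ congruence modularity of $3$-permutable varieties) is never invoked: the argument uses only the Triangular Lemma, which makes the varietal statement slightly sharper and the proof shorter. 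The trade-off is that the paper's two-stage structure is tailored to the categorical translation in Proposition~\ref{Goursat and dist}, where the specific relation $P$ and the matrix \eqref{matrix} mirror exactly the Triangular-then-Shifting decomposition; your single-stage version would require designing a different relation $P$ (and re-verifying that your tuples fit the $PP^{\circ}PP^{\circ}=PP^{\circ}$ format of Theorem~\ref{Goursat chars}(iv)) before it could replace the paper's proof of the categorical result, so the final remark in your proposal is a promise rather than something already delivered.
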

\begin{proof} Let $R, S$ and $T$ be congruences on the same algebra $X$ in $\V$ such that $R\wedge S\leqslant T$. Suppose that $x,y,u,v$ are elements in $X$ related as in \eqref{TpL}. From the relations
	\begin{equation}\label{A}
	\begin{array}{l}
	x T x S x R x \\
	x T x S u R u \\
	x T y S v R u  \\
	y T y S y R y,
	\end{array}
	\end{equation}
	we may deduce the following ones by applying the quaternary operations $p$ and $q$ (see~\eqref{quaternary}), respectively:
	$$
	x T p(x,x,y,y) S p(x,u,v,y) R x
	$$
	and
	$$
	y T q(x,x,y,y) S q(x,u,v,y) R y.
	$$
	We apply the Triangular Lemma to
	\begin{equation}\label{t1}
	\vcenter{\xymatrix@R=40pt{ & x \ar@{-}[dl]_-T \ar@{-}[d]^-R \ar@(r,r)@{--}[d]^(.5)T \\
			p(x,x,y,y) \ar@{-}[r]_-S & p(x,u,v,y)}}
	\end{equation}
	and
	\begin{equation}\label{t2}
	\vcenter{\xymatrix@R=40pt{ & y \ar@{-}[dl]_-T \ar@{-}[d]^-R \ar@(r,r)@{--}[d]^(.5)T \\
			q(x,x,y,y) \ar@{-}[r]_-S & q(x,u,v,y).}}
	\end{equation}
	
	Next, we apply the Shifting Lemma to
	\begin{equation}\label{SL1}
	\vcenter{\xymatrix@R=40pt{ x \ar@{-}[d]^-R \ar@(l,l)@{-}[d]_(.5)T \ar@{}[d]_-{\eqref{t1}} \ar@{-}[r]^-S & u =p(u,u,u,v) \ar@{-}[d]_-R \ar@(r,r)@{--}[d]^-T \\
			p(x,u,v,y) \ar@{-}[r]_-S & p(u,u,v,v)}}
	\end{equation}
	and
	\begin{equation}\label{SL2}
	\vcenter{\xymatrix@R=40pt{ y \ar@{-}[d]^-R \ar@(l,l)@{-}[d]_(.5)T \ar@{}[d]_-{\eqref{t2}} \ar@{-}[r]^-S & v =q(u,u,u,v)\ar@{-}[d]_-R \ar@(r,r)@{--}[d]^-T \\
			q(x,u,v,y) \ar@{-}[r]_-S & q(u,u,v,v).}}
	\end{equation}
	From \eqref{SL1} and ~\eqref{SL2}, we obtain $u T p(u,u,v,v)=q(u,u,v,v) T v$; it follows that $(u,v)\in T$.
\end{proof}

We adapt this varietal proof into a categorical one using an appropriate matrix and the corresponding relations which may be deduced from it (see~\cite{JRVdL} for more details). The kind of matrix we use translates the quaternary identities~\eqref{quaternary} into the property on relations given in Theorem~\ref{Goursat chars}(iv):
\begin{equation}\label{matrix}
\left( \begin{array}{cccc|cc}
x & y & y & z & x & z \\
u & u & v & v & \alpha & \alpha
\end{array}\right)
\end{equation}
The first and second columns after the vertical separation in the matrix are the result of applying $p$ and $q$, respectively, to the elements in the lines before the vertical separation. Thus, the introduction of a new element $\alpha$, to represent the identity $p(u,u,v,v)=q(u,u,v,v) (=\alpha)$. We then ``interpret'' the matrix as giving relations between top elements and bottom elements as follows. Whenever the relations before the vertical separation in the matrix are assumed to hold, then we may conclude that the relations after the vertical separation also hold. For this matrix, the interpretation gives: for any binary relation $P$, if $xPu, yPu, y P v$ and $zPv$, then $x P \alpha$ and $z P \alpha$, for some $\alpha$; this gives the property $PP^{\circ}PP^{\circ}\leqslant PP^{\circ}$. Since $PP^{\circ}\leqslant PP^{\circ}PP^{\circ}$ is always true, we get precisely $PP^{\circ}PP^{\circ}= PP^{\circ}$ from  Theorem~\ref{Goursat chars}(iv).

\begin{proposition}\label{Goursat and dist}
	Let $\C$ be a Goursat category. The following conditions are equivalent:
	\begin{enumerate}
		\item[(i)] the Triangular Lemma holds in $\C$;
		\item[(ii)] the Trapezoid Lemma holds in $\C$;
		\item[(iii)] $\C$ is equivalence distributive.
	\end{enumerate}
\end{proposition}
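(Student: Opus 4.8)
The plan is to isolate the single substantial implication, (i) $\Rightarrow$ (ii), and to obtain the rest by the formal arguments already used for Proposition~\ref{Mal'tsev and dist}. For the easy directions I would only replace the Mal'tsev join $S\vee T=ST$ by its Goursat counterpart $S\vee T=STS$ from Theorem~\ref{Goursat chars}(iii). Then (ii) $\Rightarrow$ (iii) reads
\[
R\wedge(S\vee T)=R\wedge STS\leqslant T,
\]
the last inequality being \textbf{(TpL)}; conversely, since $STS\leqslant S\vee T$ holds in any regular category, (iii) $\Rightarrow$ (ii) follows from $R\wedge STS\leqslant R\wedge(S\vee T)\leqslant T$ via~\eqref{equivalent to dist}; and (ii) $\Rightarrow$ (i) is immediate. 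It remains to prove (i) $\Rightarrow$ (ii).

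For this I would transport the varietal proof of Lemma~\ref{3-perm + TL => TpL} into $\C$, reasoning elementwise on $x,y,u,v$ related as in~\eqref{TpL} by virtue of Barr's Theorem. The only ingredient that does not survive verbatim is the use of the quaternary operations $p,q$ of~\eqref{quaternary}, which in the variety manufacture the auxiliary elements $p(x,x,y,y),\,p(x,u,v,y),\,q(x,x,y,y),\,q(x,u,v,y)$ together with the common value $\alpha=p(u,u,v,v)=q(u,u,v,v)$. These elements, and the $R$-, $S$- and $T$-links they satisfy, I would instead supply by the matrix~\eqref{matrix}, whose relational meaning is exactly the Goursat identity $PP^{\circ}PP^{\circ}=PP^{\circ}$ of Theorem~\ref{Goursat chars}(iv). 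Once the auxiliary elements are available the argument is purely diagrammatic: a Goursat category is congruence modular, so the Shifting Lemma~\textbf{(SL)} is at our disposal alongside the hypothesised Triangular Lemma, and I would run the two Triangular-Lemma steps~\eqref{t1}--\eqref{t2} and the two Shifting-Lemma steps~\eqref{SL1}--\eqref{SL2} unchanged, arriving at $u\,T\,\alpha$ and $\alpha\,T\,v$, hence $(u,v)\in T$.

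The hard part is the passage from operations to the matrix~\eqref{matrix}. What the varietal proof gets for free from compatibility of congruences with $p$ and $q$, namely a \emph{single} coherent family of auxiliary elements simultaneously respecting $R$, $S$ and $T$, must now be extracted from the Goursat property, whose basic relational form~\eqref{matrix} a priori produces existential witnesses one relation at a time. The crux is therefore to organise the deduction so that one and the same element $\alpha$ closes both the $p$- and the $q$-computation (this is precisely the identity $p(u,u,v,v)=q(u,u,v,v)$) and so that the bottom $S$-edges and right-hand $R$-edges of~\eqref{SL1} and~\eqref{SL2} genuinely hold for that $\alpha$; it is here, and only here, that the full strength of $3$-permutability is consumed.
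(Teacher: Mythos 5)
Your overall architecture matches the paper's: the easy implications are handled exactly as you describe (with $S\vee T=STS$ from Theorem~\ref{Goursat chars}(iii) for (ii) $\Rightarrow$ (iii), and (iii) $\Rightarrow$ (ii) $\Rightarrow$ (i) valid in any regular category), and the paper likewise reduces everything to (i) $\Rightarrow$ (ii), proved by converting the varietal argument of Lemma~\ref{3-perm + TL => TpL} into a matrix computation feeding two Triangular-Lemma steps and two Shifting-Lemma steps.

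The problem is that you have correctly located, but not actually carried out, the one step that constitutes the proof. You say the auxiliary elements ``would be supplied by the matrix~\eqref{matrix}'' and that the crux is to organise the deduction so that a single element closes both the $p$- and $q$-computations --- but that organisation is precisely what is missing from your proposal. The generic matrix~\eqref{matrix} only expresses $PP^{\circ}PP^{\circ}\leqslant PP^{\circ}$ for an arbitrary binary relation $P$; applied to $R$, $S$ and $T$ separately it produces existential witnesses one relation at a time, which, as you yourself observe, need not cohere. The paper resolves this by a specific construction: it assembles a matrix whose columns are built from the $4$-tuples $(x,x,x,y)$, $(x,u,u,y)$, $(u,u,u,v)$ (top rows) and $(x,x,y,y)$, $(u,u,v,v)$ (bottom rows), and defines a single relation $P\rightarrowtail X^{3}\times X^{2}$ by
$$
(a,b,c)\, P\, (d,e) \;\Leftrightarrow\; \exists z \;\;\mathrm{such\;\;that}\;\; a\, T\, d\, S\, z\, R\, b,\;\; z\, S\, e\;\;\mathrm{and}\;\; e\, R\, c,
$$
which packages all the required $R$-, $S$- and $T$-links simultaneously (the extra witness $z$ absorbing the tuple $(x,u,v,y)$, which does not fit into the matrix). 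One then checks $(x,x,u)\,PP^{\circ}PP^{\circ}\,(y,y,v)$ directly from the hypotheses of~\eqref{TpL}, and the Goursat condition of Theorem~\ref{Goursat chars}(iv) yields a \emph{single} pair $(\alpha,\varepsilon)$ with $(x,x,u)\,P\,(\alpha,\varepsilon)$ and $(y,y,v)\,P\,(\alpha,\varepsilon)$; unpacking $P$ gives exactly the coherent family $\alpha,\beta,\delta,\varepsilon$ needed for the two triangles and the two shifting rectangles. Without exhibiting such a $P$ (or an equivalent device) the argument does not go through, and the appeal to Barr's Theorem cannot substitute for it, since the varietal proof uses the operations $p,q$ themselves and not merely elementwise reasoning. (A minor notational slip: the element closing the argument is the analogue of $p(u,u,v,v)=q(u,u,v,v)$, which in the paper's proof is $\varepsilon$, not $\alpha$; one concludes $u\,T\,\varepsilon\,T\,v$.)
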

\begin{proof}
	(i) $\Rightarrow$ (ii) We extend the proof of Lemma~\ref{3-perm + TL => TpL} to a categorical context by constructing an appropriate matrix of the type~\eqref{matrix}. In that proof we applied $p$ and $q$ to the $4$-tuples $(x,x,x,y)$, $(x,x,y,y)$, $(x,u,u,y)$, $(u,u,u,v)$ and $(u,u,v,v)$. We put them in the matrix so that $(x,x,x,y)$, $(x,u,u,y)$ and $(u,u,u,v)$ go to the top lines and $(x,x,y,y)$ and $(u,u,v,v)$ go to the bottom lines as follows
	$$
	\left(\begin{array}{cccc|cc}
	x & x & x & y & x & y \\
	x & u & u & y & x & y \\
	u & u & u & v & u & v \vspace{10pt}\\
	x & x & y & y & \alpha & \alpha \\
	u & u & v & v & \varepsilon & \varepsilon
	\end{array}\right)
	$$
	We also used the $4$-tuple $(x,u,v,y)$, but it does not ``fit'' into this type of matrix; it will be used in the definition of the binary relation $P$. From the matrix, we see that the relation $P$ should be defined from $X^3$ to $X^2$. The relations between the $4$-tuples in the matrix above and $(x,u,v,y)$ given in \eqref{A}, and the bottom and right hand relations in \eqref{SL1} and \eqref{SL2} tell us that $P$ should be defined as:
	$$
	(a,b,c) P (d,e) \Leftrightarrow \exists z\;\;\mathrm{such\;\;that}\;\; a T d S z R b, z S e\;\;\mathrm{and}\;\; e R c.
	$$
	From the matrix we see that $(x,x,u) PP^{\circ}PP^{\circ} (y,y,v)$, from which we conclude that $(x,x,u)PP^{\circ} (y,y,v)$. It then follows that $(x,x,u)P(\alpha,\varepsilon)$ and $(y,y,v)P(\alpha, \varepsilon)$, for some $(\alpha, \varepsilon)$, i.e. there exist $\beta$ and $\delta$ such that
	$$
	\begin{array}{l}
	x T \alpha S \beta R x,  \beta S \varepsilon \;\;\mathrm{and}\;\; \varepsilon R u \\
	y T \alpha S \delta R y, \delta S \varepsilon\;\;\mathrm{and}\;\; \varepsilon R v.
	\end{array}
	$$
	Next we apply the Triangular Lemma to
	\begin{equation}\label{t1'}
	\vcenter{\xymatrix{ & x \ar@{-}[dl]_-T \ar@{-}[d]^-R \ar@(r,r)@{--}[d]^-T \\
			\alpha \ar@{-}[r]_-S & \beta}}
	\end{equation}
	and
	\begin{equation}\label{t2'}
	\vcenter{\xymatrix{ & y \ar@{-}[dl]_-T \ar@{-}[d]^-R \ar@(r,r)@{--}[d]^-T \\
			\alpha \ar@{-}[r]_-S & \delta.}}
	\end{equation}
	We now apply the Shifting Lemma to
	\begin{equation}\label{SL1'}
	\vcenter{\xymatrix@C=30pt{ x \ar@(l,l)@{-}[d]_-T \ar@{-}[r]^-S \ar@{-}[d]^-R \ar@{}[d]_-{\eqref{t1'}} & u \ar@{-}[d]_-R \ar@(r,r)@{--}[d]^-T \\
			\beta \ar@{-}[r]_-S & \varepsilon}}
	\end{equation}
	and
	\begin{equation}\label{SL2'}
	\vcenter{\xymatrix@C=30pt{ y \ar@(l,l)@{-}[d]_-T \ar@{-}[r]^-S \ar@{-}[d]^-R \ar@{}[d]_-{\eqref{t2'}} & v \ar@{-}[d]_-R \ar@(r,r)@{--}[d]^-T \\
			\delta \ar@{-}[r]_-S & \varepsilon.}}
	\end{equation}
	From \eqref{SL1'} and \eqref{SL2'} we obtain $uT \varepsilon T v$, thus $(u,v) \in T$.
	
	(ii) $\Rightarrow$ (iii)  We prove that \eqref{equivalent to dist} holds with respect to the lattice $\Equiv(X)$ of equivalence relations on an object $X$. Let $R,S,T\in \Equiv(X)$ be such that $R\wedge S\leqslant T$. Then
	$$
	\begin{array}{lcll}
	R\wedge (S\vee T) & = & R\wedge STS, & \mathrm{by\;\;} \mathrm{Theorem\;\;}\ref{Goursat chars}\mathrm{(iii)} \\
	& \leqslant & T, & \mathrm{by\;\;} \mathrm{\textbf{(TpL)}.}
	\end{array}
	$$
	
	The converse implications always hold in a regular context, as observed after the proof of Proposition~\ref{Mal'tsev and dist}.
\end{proof}

\begin{remark}
	In a varietal context, we know that the validity of the Shifting Lemma and the Triangular Lemma is equivalent to the validity of the Trapezoid Lemma (Theorem~\ref{Sl+TL=TpL}). We do not know if this result can be generalised to the context of a regular Gumm category \cite{BG0, BG1}. However,
	Propositions~\ref{Mal'tsev and dist} and \ref{Goursat and dist} show that this equivalence between the validity of the Triangular Lemma and the Trapezoid Lemma does hold under the stronger conditions that the base category is regular Mal'tsev and Goursat, respectively.
\end{remark}

\begin{remark}
	Note that another characterisation of regular Goursat categories which are equivalence distributive is given in \cite{DBourn}. A regular Goursat category is equivalence distributive if and only if the regular image of equivalence relations preserves binary meets: $f(R\wedge S)=f(R)\wedge f(S)$, for any regular epimorphism $f\colon X\twoheadrightarrow Y$ and $R,S\in \Equiv(X)$.
\end{remark}

\section{Equivalence distributive Mal'tsev categories}\label{edM}
In~\cite{GRT} we proved that regular Mal'tsev categories may be characterised through variations of the Shifting Lemma. Thanks to the results in the previous section we can now give some new characterisations of equivalence distributive Mal'tsev categories through similar variations of the Triangular and of the Trapezoid Lemmas.

The variations of the Triangular and of the Trapezoid Lemmas that we have in mind take $R,S$ or $T$ to be just reflexive relations. Note that, for diagrams such as \eqref{SL}, \eqref{TL} or \eqref{TpL}, where $R,S$ or $T$ are not symmetric, the relations are always to be considered from left to right and from top to bottom. To avoid ambiguity with the interpretation of such diagrams, from now on we will write $\xymatrix{ x \ar[r]^U &y} $
to mean that $(x,y) \in U$, whenever $U$ is a non-symmetric relation.

\begin{theorem}	
	\label{Mal'tsev+dist <=> TTrL} Let $\C$ be a regular category. The following conditions are equivalent:
	\begin{enumerate}
		\item[(i)] $\C$ is an equivalence distributive Mal'tsev category;
		\item[(ii)] the Trapezoid Lemma holds in $\C$ when $R,S$ and $T$ are reflexive relations;
		\item[(iii)] the Triangular Lemma holds in $\C$ when $R,S$ and $T$ are reflexive relations.
	\end{enumerate}
\end{theorem}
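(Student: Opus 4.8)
The plan is to establish the cycle (i) $\Rightarrow$ (ii) $\Rightarrow$ (iii) $\Rightarrow$ (i), with essentially all the substance concentrated in the last implication. The guiding observation for the two easy arrows is that \emph{in a Mal'tsev category every reflexive relation is already an equivalence relation}: if $E$ is reflexive then $E = E^{\circ}$ by Theorem~\ref{Mal'tsev chars}(iv), while difunctionality (Theorem~\ref{Mal'tsev chars}(v)) gives $E = EE^{\circ}E = E^3$, and reflexivity yields $E \leqslant E^2 \leqslant E^3 = E$, so $E^2 = E$ is transitive. Consequently the reflexive versions of the Triangular and Trapezoid Lemmas carry exactly the same content as their equivalence-relation versions. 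Thus (i) $\Rightarrow$ (ii) is merely the statement that an equivalence distributive Mal'tsev category satisfies the ordinary Trapezoid Lemma, which is Proposition~\ref{Mal'tsev and dist}.

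For (ii) $\Rightarrow$ (iii) I would specialise the reflexive Trapezoid configuration \eqref{TpL} by setting $x = u$: reflexivity of $S$ makes the edge $(x,u)\in S$ automatic, and what remains -- $(u,y)\in T$, $(y,v)\in S$ and $(u,v)\in R$ forcing $(u,v)\in T$, with the side condition $R\wedge S \leqslant T$ unchanged -- is precisely the reflexive Triangular configuration \eqref{TL}. So no extra work is needed here. This reduces the theorem to (iii) $\Rightarrow$ (i), which I would in turn reduce to a single point: it suffices to show that the reflexive Triangular Lemma forces $\C$ to be a Mal'tsev category. Once Mal'tsev-ness is available, reflexive relations are equivalence relations by the observation above, condition (iii) becomes the ordinary Triangular Lemma, and Proposition~\ref{Mal'tsev and dist} upgrades this to equivalence distributivity; combining the two gives (i).

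The real content to prove is therefore that \emph{the reflexive Triangular Lemma implies that every reflexive relation is symmetric} (equivalently, by Theorem~\ref{Mal'tsev chars}, that every relation is difunctional), and I expect this to be the main obstacle. It cannot be settled by a one-line relational manipulation: a naive attempt to reverse an edge of a reflexive relation $E$ on $X$ by feeding relations built from $E$ and $E^{\circ}$ into \eqref{TL} is circular, since any choice making the hypotheses hold forces $E^{\circ}\leqslant R\wedge S$, whereas the side condition demands $R\wedge S \leqslant T = E$, i.e. exactly the symmetry $E^{\circ}\leqslant E$ one is trying to establish. The directedness of the Triangular Lemma must instead be exploited through auxiliary objects, which is what distinguishes condition (iii) from the merely majority (hence non-permutable) situation recorded in the remark after Proposition~\ref{Mal'tsev and dist}.

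Accordingly, following the strategy of~\cite{GRT} together with the matrix calculus of~\cite{JRVdL}, I would first treat the varietal case: apply the reflexive Triangular Lemma to the reflexive compatible relation generated by the generic pair on a free algebra in order to extract a Mal'tsev term. I would then transcribe this argument into an arbitrary regular category by encoding it as a matrix condition of the type \eqref{matrix} and invoking Barr's embedding theorem~\cite{Barr}, exactly as in the passage from Lemma~\ref{3-perm + TL => TpL} to Proposition~\ref{Goursat and dist}. The delicate step, where I expect the difficulty to sit, is choosing reflexive relations $R,S,T$ on the appropriate auxiliary object that genuinely satisfy $R\wedge S \leqslant T$ (non-circularly) while their Triangular configuration nevertheless produces the reversed edge, thereby delivering symmetry and hence Mal'tsev-ness.
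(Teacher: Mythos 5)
Your overall architecture (the cycle (i) $\Rightarrow$ (ii) $\Rightarrow$ (iii) $\Rightarrow$ (i), the observation that reflexive relations in a Mal'tsev category are equivalence relations, and the reduction of (iii) $\Rightarrow$ (i) to proving Mal'tsev-ness followed by an appeal to Proposition~\ref{Mal'tsev and dist}) coincides with the paper's. However, the one implication that carries all the content --- that the reflexive Triangular Lemma forces every reflexive relation to be symmetric --- is not actually proved: you correctly diagnose that a naive relational manipulation is circular and that auxiliary objects are needed, but you then only announce a plan (extract a Mal'tsev term on a free algebra, then transcribe via a matrix condition and Barr's embedding) and explicitly defer ``the delicate step'' of choosing the relations $R,S,T$. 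That step is the theorem. Moreover, the proposed route is misdirected: Barr's metatheorem validates elementwise reasoning for statements in regular logic, but it does not transport a syntactic argument that extracts terms from free algebras into an arbitrary regular category; the matrix calculus of~\cite{JRVdL} is the tool the paper uses elsewhere (Proposition~\ref{Goursat and dist}), but it is not needed here and you have not carried it out.

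The missing construction is short and direct, following Theorem 3.2 of~\cite{GRT}. Given a reflexive relation $\langle e_1,e_2\rangle\colon E\rightarrowtail X\times X$ and $(x,y)\in E$, work on the object $E$ itself and define two reflexive relations on it: $(aEb,cEd)\in R$ iff $(a,d)\in E$, and $(aEb,cEd)\in T$ iff $(c,b)\in E$. Take $S=\Eq(e_2)$, the kernel pair of the second projection. Since two pairs with the same second coordinate satisfy both defining conditions, $\Eq(e_2)\leqslant R$ and $\Eq(e_2)\leqslant T$, so $R\wedge\Eq(e_2)=\Eq(e_2)\leqslant T$ holds non-circularly. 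The triangle with $(xEx,xEy)\in T$, $(xEy,yEy)\in\Eq(e_2)$ and $(xEx,yEy)\in R$ then yields $(xEx,yEy)\in T$, i.e.\ $(y,x)\in E$, giving symmetry and hence Mal'tsev-ness by Theorem~\ref{Mal'tsev chars}(iv). Without this (or an equivalent) construction, your argument has a genuine gap precisely where the difficulty lies.
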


\begin{proof}
	(i) $\Rightarrow$ (ii)	Since $\C$ is a Mal'tsev category, reflexive relations are necessarily equivalence relations. Since $\C$ is also equivalence distributive, by Proposition \ref{Mal'tsev and dist}, the Trapezoid Lemma holds for any reflexive relations in $\C$.\\
	(ii) $\Rightarrow$ (iii)	is obvious.  \\
	(iii) $\Rightarrow$ (i) We follow the proof of Theorem 3.2 of~\cite{GRT} with respect to the implication: if the Shifting Lemma holds in $\C$ for reflexive relations, then $\C$ is a Mal'tsev category. The main issue is to fit the rectangle to which we applied the Shifting Lemma in that result, into a suitable triangle to which we shall now apply the Triangular Lemma (to get the same conclusion that $\C$ is a Mal'tsev category).
	
	To prove that $\C$ is a Mal'tsev category, we show that any reflexive relation $\langle e_1,e_2\rangle \colon E\rightarrowtail X\times X$ in $\C$ is also symmetric (Theorem~\ref{Mal'tsev chars}(iv)). Suppose that $(x,y)\in E$, and consider the reflexive relations $T$ and $R$ on $E$ defined as follows: \begin{center}$(aEb,cEd)\in R$ if and only if $(a,d)\in E$, and \\$(aEb, cEd)\in T$  if and only if $(c,b)\in E$.\end{center}
	The third reflexive relation on $E$ we consider is the \emph{kernel pair} $\Eq(e_2)$ of the second projection $e_2$.
	$\Eq(e_2)$ is an equivalence relation,  with the property that $\Eq(e_2)\leqslant R$ and $\Eq(e_2)\leqslant T$, so that $R\wedge \Eq(e_2)=\Eq(e_2)\leqslant T$. We can apply the assumption to the following relations given in solid lines
	$$
	\xymatrix@C=50pt{
		{} & xEx  \ar[dl]_-T  \ar[d]_-R \ar@(r,r)@{-->}[d]^-T \\
		xEy \ar[r]_-{\Eq(e_2)}  & yEy }
	$$
	($xEx$ and $yEy$ by the reflexivity of the relation $E$). We conclude that $(xEx,yEy)\in T$ and, consequently, that $(y,x)\in E$, so that $\C$ is a Mal'tsev category.
	
	Since the Triangular Lemma holds in $\C$, by Proposition \ref{Mal'tsev and dist} the category $\C$ is equivalence distributive.\\
	
\end{proof}
In the proof of the implication (iii) $\Rightarrow$ (i) we only used two ``genuine'' reflexive relations $R$ and $T$. This observation gives:
\begin{corollary} \label{Mal'tsev+dist <=> TTrL 2}
	Let $\C$ be a regular category. The following conditions are equivalent:	\begin{enumerate}
		\item[(i)] $\C$ is an equivalence distributive  Mal'tsev category;
		\item[(ii)] the Trapezoid Lemma holds in $\C$ when $R$ and $T$ are reflexive relations and $S$ is an equivalence relation;
		\item[(iii)] the Triangular Lemma holds in $\C$ when $R$ and $T$ are reflexive relations and $S$ is an equivalence relation.
	\end{enumerate}
\end{corollary}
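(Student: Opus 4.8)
The plan is to obtain this as a direct refinement of Theorem~\ref{Mal'tsev+dist <=> TTrL}, the point being precisely the observation recorded just before the statement: in the proof of (iii) $\Rightarrow$ (i) of that theorem, the relation occupying the role of $S$ in the single application of the Triangular Lemma is already an equivalence relation, while only $R$ and $T$ need to be genuinely reflexive. First I would dispatch the two easy implications. For (i) $\Rightarrow$ (ii), since $\C$ is a Mal'tsev category every reflexive relation is an equivalence relation, so the reflexive relations $R$ and $T$ of the hypothesis are in fact equivalence relations; together with the assumption that $S$ is an equivalence relation, the required instance reduces to the full Trapezoid Lemma, which holds by Proposition~\ref{Mal'tsev and dist}. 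The implication (ii) $\Rightarrow$ (iii) is immediate, since the Triangular Lemma is a weakening of the Trapezoid Lemma under the same constraints on $R$, $S$ and $T$ (set the two top vertices equal, using reflexivity of $S$).

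For the substantive implication (iii) $\Rightarrow$ (i) I would reuse verbatim the construction from the proof of (iii) $\Rightarrow$ (i) in Theorem~\ref{Mal'tsev+dist <=> TTrL}. To prove that $\C$ is a Mal'tsev category it suffices, by Theorem~\ref{Mal'tsev chars}(iv), to show that an arbitrary reflexive relation $\langle e_1,e_2\rangle\colon E\rightarrowtail X\times X$ is symmetric. Given $(x,y)\in E$, I would introduce the two reflexive relations $R$ and $T$ on $E$ defined exactly as there, together with the kernel pair $\Eq(e_2)$ of the second projection. The crucial point is that $R$ and $T$ are only reflexive, whereas the relation playing the role of $S$ is $\Eq(e_2)$, which is an \emph{equivalence} relation and satisfies $R\wedge \Eq(e_2)=\Eq(e_2)\leqslant T$. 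Hence the weaker hypothesis (iii) of the present corollary already applies to the triangle with vertices $xEx$, $xEy$ and $yEy$, yielding $(xEx,yEy)\in T$ and therefore $(y,x)\in E$, so that $\C$ is a Mal'tsev category. Once this is known, every reflexive relation is an equivalence relation, so hypothesis (iii) upgrades to the Triangular Lemma for arbitrary equivalence relations, and Proposition~\ref{Mal'tsev and dist} then yields that $\C$ is equivalence distributive.

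The only thing requiring care is the verification that the relations built on $E$ meet the constraints of condition (iii): that $R$ and $T$ are reflexive, that $\Eq(e_2)$ is an equivalence relation, and that $R\wedge \Eq(e_2)\leqslant T$. Since all of these are already established inside the proof of Theorem~\ref{Mal'tsev+dist <=> TTrL}, there is effectively no additional obstacle; the corollary is simply a matter of observing that the single triangle used in that argument never needs $S$ to be more general than an equivalence relation, nor $R$ and $T$ to be anything beyond reflexive.
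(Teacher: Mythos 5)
Your proposal is correct and matches the paper's argument exactly: the paper proves this corollary by the single observation, recorded just before the statement, that in the proof of (iii) $\Rightarrow$ (i) of Theorem~\ref{Mal'tsev+dist <=> TTrL} the relation playing the role of $S$ is the kernel pair $\Eq(e_2)$, which is already an equivalence relation, so only $R$ and $T$ need to be genuinely reflexive. Your handling of the easy implications (reflexive relations being equivalence relations in a Mal'tsev category, and the Triangular Lemma being a special case of the Trapezoid Lemma) likewise coincides with the paper's.
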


\begin{remark}\label{arithmetical categories}
	An arithmetical category $\mathbb C$ is an equivalence distributive and Mal'tsev category which is, moreover, Barr-exact. Note that in this article we do not assume the existence of coequalisers, differently from what was done in
	Pedicchio's original definition of arithmetical category \cite{Pedic}. So, given a Barr-exact category $\C$, the same equivalent conditions stated in Theorem~\ref{Mal'tsev+dist <=> TTrL}(ii), Theorem~\ref{Mal'tsev+dist <=> TTrL}(iii), Corollary~\ref{Mal'tsev+dist <=> TTrL 2}(ii) and Corollary~\ref{Mal'tsev+dist <=> TTrL 2}(iii) give characterisations of the fact that $\C$ is an arithmetical category.
\end{remark}

We finish this section with a characterisation of equivalence distributive Mal'tsev categories through a property on ternary relations which is stronger than difunctionality (Theorem~\ref{Mal'tsev chars}(v)). The difunctionality of a binary relation $D\rightarrowtail X\times U$, $DD^{\circ}D=D$ can be pictured as
$$
\begin{array}{c}
x D u \\
y D u \\
y D v \\
\hline
x D v.
\end{array}
$$
Whenever the first three relations hold, we can conclude that the bottom relation $xDv$ holds.

Recall from~\cite{Pixley} that an arithmetical variety is such that there exists a Pixley term $p(x,y,z)$ such that $p(x,y,y)=x, p(x,x,y)=y$ and $p(x,y,x)=x$. We translate these Mal'tsev conditions into a property on relations (following the technique in~\cite{JanelidzeI}) which is expressed for ternary relations $D\rightarrowtail (X\times A)\times U$, seen as binary relations from $X\times A$ to $U$. It may be pictured as

\begin{equation}\label{DD}
\begin{array}{c}
(x,a) D u \\
(y,b) D u \\
(y,a) D v \\
\hline
(x,a) D v.
\end{array}
\end{equation}
This condition on the relation $D$ follows from applying the Pixley term to each column of elements, and writing the result in the bottom line.
In a regular context, property \eqref{DD} is equal to
$$
(\Eq(\pi_A)\wedge DD^{\circ}\Eq(\pi_X))D\leqslant D.
$$

\begin{theorem}\label{ternary difunctionality}
	Let $\C$ be a regular category. The following conditions are equivalent:
	\begin{enumerate}
		\item[(i)] $\C$ is an equivalence distributive  Mal'tsev category;
		\item[(ii)] any relation $D\rightarrowtail (X\times A)\times U$ has property \eqref{DD}.
	\end{enumerate}
\end{theorem}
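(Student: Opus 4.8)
The plan is to prove the two implications separately, reading (ii) as the categorical incarnation of the existence of a Pixley term, so that (i)$\Leftrightarrow$(ii) becomes a relational version of Pixley's theorem characterising arithmetical algebras.

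For (i)$\Rightarrow$(ii) I would argue entirely through the reflexive Triangular Lemma, which is at our disposal by Theorem~\ref{Mal'tsev+dist <=> TTrL}(iii) since $\C$ is an equivalence distributive Mal'tsev category. Given $D\rightarrowtail(X\times A)\times U$ with projections $d_X\colon D\to X$, $d_A\colon D\to A$, $d_U\colon D\to U$, and elements with $(x,a)Du$, $(y,b)Du$, $(y,a)Dv$, I would regard the three hypotheses as three points $P_1=(x,a,u)$, $P_2=(y,b,u)$, $P_3=(y,a,v)$ of $D$, and introduce on the object $D$ the reflexive relations $S=\Eq(d_X)$, $R=\Eq(d_A)$ together with $T$ defined by $(P,Q)\in T$ if and only if $(d_X(P),d_A(P))\,D\,d_U(Q)$, i.e. $T$ recombines the source of $P$ with the image of $Q$; note $T$ is reflexive precisely because each point of $D$ lies in $D$. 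The crucial point is that $R\wedge S\leqslant T$: if $P$ and $Q$ agree in both their $X$- and their $A$-component then $(d_X(P),d_A(P))=(d_X(Q),d_A(Q))$, and $Q\in D$ forces $(d_X(P),d_A(P))\,D\,d_U(Q)$. Now $(P_1,P_2)\in T$ is exactly $(x,a)Du$, while $(P_2,P_3)\in S$ and $(P_1,P_3)\in R$ hold because $P_2,P_3$ share $y$ and $P_1,P_3$ share $a$; applying the reflexive Triangular Lemma to the triangle with apex $P_1$ gives $(P_1,P_3)\in T$, which decodes to $(x,a)Dv$, that is, property~\eqref{DD}.

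For (ii)$\Rightarrow$(i) I would first extract the Mal'tsev property by specialising to $A$ terminal: then $D\rightarrowtail(X\times 1)\times U$ is an arbitrary relation from $X$ to $U$, the $A$-column of \eqref{DD} collapses, and the property reads $DD^{\circ}D\leqslant D$; since the reverse inequality always holds this is difunctionality, so $\C$ is a Mal'tsev category by Theorem~\ref{Mal'tsev chars}(v). By Proposition~\ref{Mal'tsev and dist} it then suffices to deduce the Triangular Lemma for equivalence relations, and here I would mirror at the relational level the varietal computation by which a Pixley term $p$ proves it: for $R,S,T$ with $R\wedge S\leqslant T$, $(\bar u,\bar y)\in T$, $(\bar y,\bar v)\in S$, $(\bar u,\bar v)\in R$, the element $e=p(\bar u,\bar y,\bar v)$ satisfies $e\mathrel S\bar u$ (from $p(\bar u,\bar v,\bar v)=\bar u$), $e\mathrel R\bar v$ (from $p(\bar v,\bar y,\bar v)=\bar v$) and $e\mathrel T\bar v$ (from $p(\bar y,\bar y,\bar v)=\bar v$); combining $e\mathrel R\bar v$ with $\bar u\mathrel R\bar v$ yields $\bar u\mathrel R e$, so that $\bar u\mathrel{(R\wedge S)}e\leqslant T$ and, with $e\mathrel T\bar v$, one gets $(\bar u,\bar v)\in T$. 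I would package this into a relation $\mathcal D$ of the form $(Z\times Z)\times Z$ built from $R,S,T$ in the spirit of the matrix technique of~\cite{JRVdL} used for Proposition~\ref{Goursat and dist}, the three rows of the Pixley matrix being the tuples $(\bar u,\bar v,\bar y)$, $(\bar v,\bar y,\bar y)$, $(\bar v,\bar v,\bar v)$ forced into $\mathcal D$ by the hypotheses, and the output tuple $(\bar u,\bar v,\bar v)$ produced by \eqref{DD} decoding, through $R\wedge S\leqslant T$, to $(\bar u,\bar v)\in T$.

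The main obstacle is exactly this last encoding. Choosing the defining formula of $\mathcal D$ so that all three rows are genuinely forced into it by the Triangular hypotheses, while the single tuple produced by \eqref{DD} collapses via $R\wedge S\leqslant T$ to the desired membership in $T$, is delicate: a naive conjunction of binary conditions on the coordinates fails to contain all three rows simultaneously, and the glueing element $e$ of the varietal argument has no direct categorical counterpart, so it must be absorbed into the composite defining $\mathcal D$. One must also keep careful track of the left-to-right, top-to-bottom reading of the non-symmetric recombination relation $T$, exactly as in the conventions preceding Theorem~\ref{Mal'tsev+dist <=> TTrL}.
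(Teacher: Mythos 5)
Your implication (i)$\Rightarrow$(ii) is correct, though it takes a different route from the paper: you reduce property \eqref{DD} to the reflexive Triangular Lemma of Theorem~\ref{Mal'tsev+dist <=> TTrL}(iii) via the reflexive relation $T$ on $D$ given by $(P,Q)\in T \Leftrightarrow (d_X(P),d_A(P))\,D\,d_U(Q)$, and your verification that $\Eq(d_A)\wedge \Eq(d_X)\leqslant T$ and that $(P_1,P_2)\in T$, $(P_2,P_3)\in S$, $(P_1,P_3)\in R$ is sound. The paper instead works only with the three kernel pairs $\Eq(d_1),\Eq(d_2),\Eq(d_3)$ on $D$, uses distributivity together with Theorem~\ref{Mal'tsev chars}(iii) to get $\Eq(d_1)(\Eq(d_2)\wedge\Eq(d_3))=(\Eq(d_1)\Eq(d_2))\wedge(\Eq(d_1)\Eq(d_3))$, extracts the middle element $(y,a,u)\in D$, and concludes by difunctionality. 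Your version buys a shorter argument at the cost of invoking the stronger Theorem~\ref{Mal'tsev+dist <=> TTrL}; both are legitimate since that theorem is established independently.

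The implication (ii)$\Rightarrow$(i) is where the proposal has a genuine gap. Getting the Mal'tsev property by setting $A=1$ is fine, and your varietal Pixley-term computation of the Triangular Lemma is correct as far as it goes, but the whole content of this direction is the categorical encoding that you explicitly leave unresolved (``the glueing element $e$ of the varietal argument has no direct categorical counterpart''). It does have one, and without it the proof is incomplete. The paper defines $D\rightarrowtail (X\times X)\times X$ by $(a,b)\,D\,c \Leftrightarrow \exists d\;(d\,S\,a \wedge d\,T\,b \wedge d\,R\,c)$ --- precisely the existentially quantified composite that absorbs the glueing element --- and checks that the triangular hypotheses force $(u,y)\,D\,v$, $(y,v)\,D\,v$ and $(y,y)\,D\,y$ with witnesses $d=u,v,y$ respectively. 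Property \eqref{DD} then yields $(u,y)\,D\,y$, i.e.\ a witness $w$ with $w\,S\,u$, $w\,T\,y$ and $w\,R\,y$, and one concludes $(u,v)\in T$ by the Shifting Lemma applied to the square with vertices $w,u,y,v$ (the Shifting Lemma being available since any regular Mal'tsev category is congruence modular). Note also that the rows you propose for your relation $\mathcal D$ do not obviously all lie in any single relation definable by a conjunction of the given data, which is exactly the obstacle the existential quantifier removes; until you supply such a $\mathcal D$ and verify its three memberships and the decoding of the output, this direction is not proved.
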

\begin{proof}
	(i) $\Rightarrow$ (ii) Suppose that the first three relations in \eqref{DD} hold. Consider the equivalence relations $\Eq(d_1), \Eq(d_2)$ and $\Eq(d_3)$ on $D$ given by the kernel pairs of the projections of $D$. We have
	$$
	\begin{array}{rcl}
	(x,a,u) \,\Eq(d_2)\, (y,a,v) & \Rightarrow & (x,a,u)\, \Eq(d_1)\Eq(d_2)\, (y,a,v). \\
	(x,a,u)\, \Eq(d_3)\, (y,b,u)\, \Eq(d_1)\, (y,a,v) & \Rightarrow & (x,a,u) \,\Eq(d_1)\Eq(d_3)\, (y,a,v)
	\end{array}
	$$
	By assumption, $\Eq(d_1) (\Eq(d_2) \wedge \Eq(d_3) ) = (\Eq(d_1)\Eq(d_2))\wedge (\Eq(d_1)\Eq(d_3))$ (distributivity and Theorem~\ref{Mal'tsev chars}(iii)). Thus
	$$
	(x,a,u)\, \Eq(d_1)(\Eq(d_2)\wedge \Eq(d_3))\, (y,a,v),
	$$
	i.e.
	$$
	(x,a,u) \,\Eq(d_2) \wedge \Eq(d_3)\, (y,a,u)\, \Eq(d_1)\, (y,a,v)
	$$
	and, consequently, $(y,a,u)\in D$. Now we use the difunctionality of $D$ (Theorem~\ref{Mal'tsev chars}(v))
	$$
	\begin{array}{c}
	(x,a) D u \\
	(y,a) D u \\
	(y,a) D v \\
	\hline
	(x,a) D v,
	\end{array}
	$$
	to conclude that $(x,a) D v$.
	
	(ii) $\Rightarrow$ (i) The assumption applied to the case when $A=1$, is precisely difunctionality of any binary relation, so $\C$ is a Mal'tsev category (Theorem~\ref{Mal'tsev chars}(v)).
	
	Since $\C$ is a Mal'tsev category, we just need to prove the Triangular Lemma to conclude that $\C$ is equivalence distributive (Proposition~\ref{Mal'tsev and dist}). Consider equivalence relations $R,S$ and $T$ on an object $X$, such that $R\wedge S\leqslant T$ and that the relations in \eqref{TL} hold.
	
	We consider a relation $D\rightarrowtail (X\times X)\times X$ defined by
	$$
	(a,b) D c \Leftrightarrow \exists d\in X : dSa, dTb \;\;\mathrm{and}\;\; dRc.
	$$
	We have the following first three relations for $d=u, d=v$ and $d=y$, respectively,
	$$
	\begin{array}{c}
	(u,y) D v \\
	(y,v) D v \\
	(y,y) D y \\
	\hline
	(u,y) D y;
	\end{array}
	$$
	by assumption, we conclude that $(u,y)Dy$. By the definition of $D$, there exists $w\in X$ such that $wSu$, $wTy$ and $wRy$. We can then apply the Shifting Lemma to
	$$
	\vcenter{\xymatrix@C=30pt{
			w \ar@{-}[r]^-S \ar@{-}[d]^-R \ar@(l,l)@{-}[d]_-T & u \ar@{-}[d]_-R \ar@(r,r)@{--}[d]^-T \\
			y \ar@{-}[r]_-S  & v, }}
	$$
	to conclude that $uTv$.
\end{proof}

\section{Equivalence distributive Goursat categories}\label{edG}
In~\cite{GRT} we showed that Goursat categories may be characterised through variations of the Shifting Lemma. Together with the results from Section~\ref{edM}, we are going to characterise equivalence distributive Goursat categories through similar variations of the Triangular and the Trapezoid Lemmas. Such variations use the notion of positive relation.

\label{Positive relation} A relation $E$ on $X$  is called \textbf{positive} \cite{Sel} when it is of the form $ E = R^oR $ for some relation $R \rightarrowtail X \times Y$.

\begin{lemma}\label{lem}
	Let  $\mathbb{C}$ be a regular category. Then:
	\begin{enumerate}
		\item[(i)] any positive relation  is symmetric;
		\item[(ii)] any equivalence relation is positive.
	\end{enumerate}
	\begin{proof}
		$(i)$ Let $E$ be a positive relation and $R$ a relation such that $E = R^o R$.
		One has $E^o = (R^oR)^o = R^oR  = E.$
		
		$(ii)$ When $R$ is an equivalence relation, one has $R= R^oR.$
		
		
	\end{proof}
\end{lemma}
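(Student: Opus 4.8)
The plan is to work entirely within the calculus of relations available in any regular category, using only the converse (opposite) operation $(-)^{\circ}$, relational composition, and the defining properties of an equivalence relation. Both parts are short, so I would treat them in turn, handling $(i)$ by a one-line computation of a converse and $(ii)$ by a two-inequality argument.

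For $(i)$, I would start from the hypothesis that $E = R^{\circ}R$ for some relation $R\rightarrowtail X\times Y$ and simply compute $E^{\circ}$. The two facts needed are the contravariance of the converse under composition, $(SR)^{\circ}=R^{\circ}S^{\circ}$, and its involutivity, $(R^{\circ})^{\circ}=R$; both are standard in a regular category, where composition is defined through pullbacks and regular images. Applying them gives $E^{\circ}=(R^{\circ}R)^{\circ}=R^{\circ}(R^{\circ})^{\circ}=R^{\circ}R=E$, so $E$ is symmetric. For $(ii)$, given an equivalence relation $R$ on $X$, I would exhibit $R$ itself (viewed as a relation $R\rightarrowtail X\times X$) as the required witness and show $R=R^{\circ}R$. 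Symmetry of $R$ gives $R^{\circ}=R$, so the claim reduces to $R=RR$. Transitivity of $R$ is exactly the inequality $RR\leqslant R$, while reflexivity, $\Delta_X\leqslant R$, yields $R=R\Delta_X\leqslant RR$; combining the two inequalities gives $RR=R$, whence $R=R^{\circ}R$ is positive.

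There is no genuine obstacle here: both statements follow immediately from the relational calculus. The only point requiring a moment's care is to confirm that the identities $(SR)^{\circ}=R^{\circ}S^{\circ}$ and $(R^{\circ})^{\circ}=R$, as well as the implications transitivity $\Leftrightarrow RR\leqslant R$ and reflexivity $\Rightarrow R\leqslant RR$, are valid in an arbitrary regular category rather than merely in $\mathbf{Set}$. This is exactly what the regularity hypothesis secures, since it makes relational composition associative and compatible with the converse operation; alternatively, one may invoke Barr's Theorem to reason elementwise as recalled before Proposition~\ref{Mal'tsev and dist}.
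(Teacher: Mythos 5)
Your proposal is correct and follows essentially the same route as the paper: part $(i)$ is the same one-line converse computation $E^{\circ}=(R^{\circ}R)^{\circ}=R^{\circ}R=E$, and part $(ii)$ verifies the identity $R=R^{\circ}R$ that the paper simply asserts, with your spelling out of the reflexivity and transitivity inequalities being a harmless elaboration. No gaps.
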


The following characterisation of Goursat categories through positive relations will be useful in the sequel:

\begin{proposition}\label{Goursat positive}\emph{\cite{GRT}}
	A regular category $\C$ is a Goursat category if and only if any reflexive and positive relation in $\C$ is an equivalence relation.
\end{proposition}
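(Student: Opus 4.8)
The plan is to prove both implications by reducing to the relational characterisations of Goursat categories collected in Theorem~\ref{Goursat chars}, combined with the elementary facts about positive relations in Lemma~\ref{lem}.

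For the forward implication, suppose $\C$ is a Goursat category and let $E$ be a reflexive and positive relation on $X$, say $E = R^{\circ}R$ for some relation $R \rightarrowtail X\times Y$. Symmetry of $E$ comes for free from Lemma~\ref{lem}(i). For transitivity I would invoke Theorem~\ref{Goursat chars}(iv) applied to the relation $P = R^{\circ}$: since $PP^{\circ} = R^{\circ}R = E$, the identity $PP^{\circ}PP^{\circ} = PP^{\circ}$ reads exactly as $EE = E$. As $E$ is reflexive by hypothesis, it is then an equivalence relation.

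For the converse, assume that every reflexive and positive relation is an equivalence relation; the aim is to establish condition (v) of Theorem~\ref{Goursat chars}, namely $EE^{\circ} = E^{\circ}E$ for every reflexive relation $E$ on $X$. Both $E^{\circ}E$ and $EE^{\circ}$ are positive (take $R = E$ and $R = E^{\circ}$ respectively in the definition of positivity), and both are reflexive: since $\Delta_X \leqslant E$ forces $\Delta_X \leqslant E^{\circ}$, one has $\Delta_X \leqslant E^{\circ}E$ and $\Delta_X \leqslant EE^{\circ}$. By hypothesis they are therefore equivalence relations. Reflexivity of $E$ moreover gives the inclusions $E \leqslant EE^{\circ}$, $E^{\circ}\leqslant EE^{\circ}$, $E\leqslant E^{\circ}E$ and $E^{\circ}\leqslant E^{\circ}E$.

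The final step is then purely order-theoretic: since $EE^{\circ}$ is transitive and contains both $E$ and $E^{\circ}$, one obtains $E^{\circ}E \leqslant (EE^{\circ})(EE^{\circ}) = EE^{\circ}$; symmetrically, transitivity of $E^{\circ}E$ yields $EE^{\circ}\leqslant E^{\circ}E$, whence $EE^{\circ} = E^{\circ}E$ and $\C$ is Goursat by Theorem~\ref{Goursat chars}(v). I expect the only subtle point to be precisely this last part: merely knowing that $EE^{\circ}$ and $E^{\circ}E$ are equivalence relations is not enough, and the equality genuinely relies on each of them simultaneously dominating both $E$ and $E^{\circ}$, so that their transitivity can absorb the other composite.
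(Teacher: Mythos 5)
Your proof is correct, and it follows the route one would expect: the paper itself gives no proof here (it cites~\cite{GRT}), and the argument in that reference proceeds exactly as yours does, via Theorem~\ref{Goursat chars}(iv) for the forward direction and Theorem~\ref{Goursat chars}(v) for the converse. Your closing remark is also well taken: the key point in the converse is not merely that $EE^{\circ}$ and $E^{\circ}E$ are equivalence relations, but that each contains both $E$ and $E^{\circ}$, so that idempotence of each absorbs the other composite.
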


Let us begin with the following observation:

\begin{proposition}	\label{Goursat => TrL}
	In any equivalence distributive Goursat category $\C$, the Trapezoid Lemma holds when $S$ is a reflexive relation and $R$ and $T$ are equivalence relations.
\end{proposition}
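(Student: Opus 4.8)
The plan is to argue elementwise—legitimate by Barr's theorem—and to transport the matrix argument used for the implication (i)$\Rightarrow$(ii) of Proposition~\ref{Goursat and dist} to the situation where $S$ is merely reflexive. So let $R,T$ be equivalence relations and $S$ a reflexive relation on an object $X$ with $R\wedge S\leqslant T$, and suppose $x,y,u,v$ are related as in \eqref{TpL}, i.e.\ $(x,y)\in T$, $(x,u)\in S$, $(y,v)\in S$ and $(u,v)\in R$; we must conclude $(u,v)\in T$. Since $\C$ is an equivalence distributive Goursat category, Proposition~\ref{Goursat and dist} tells us that the Shifting, Triangular and Trapezoid Lemmas all hold \emph{for equivalence relations}, and these are the only consequences of distributivity that will be needed.

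First I would reproduce the relation $P\rightarrowtail X^3\times X^2$ from that proof, namely $(a,b,c)\,P\,(d,e)$ iff there is $z$ with $aTd$, $dSz$, $zRb$, $zSe$ and $eRc$, and check that the four instances encoded by the matrix there, namely $(x,x,u)P(x,u)$, $(x,u,u)P(x,u)$, $(x,u,u)P(y,v)$ and $(y,y,v)P(y,v)$, remain valid in the present setting. Each is witnessed by one of $z=x,u,v,y$, and verifying them uses only the reflexivity of $S$, the two prescribed (left-to-right) links $(x,u),(y,v)\in S$, and the symmetry of $R$ and $T$; the point to stress is that the symmetry of $S$ is never invoked. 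Applying Theorem~\ref{Goursat chars}(iv), $PP^{\circ}PP^{\circ}=PP^{\circ}$, to $(x,x,u)$ and $(y,y,v)$ then produces elements $\alpha,\varepsilon$ together with witnesses $\beta,\delta$ satisfying $xT\alpha S\beta Rx$, $\beta S\varepsilon$, $\varepsilon Ru$ and $yT\alpha S\delta Ry$, $\delta S\varepsilon$, $\varepsilon Rv$, exactly the relations obtained in Proposition~\ref{Goursat and dist}.

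It then remains to run the endgame: apply the Triangular Lemma to the triangles $(x,\alpha,\beta)$ and $(y,\alpha,\delta)$ to get $xT\beta$ and $yT\delta$, and next the Shifting Lemma to the squares $(x,u,\beta,\varepsilon)$ and $(y,v,\delta,\varepsilon)$ to get $uT\varepsilon$ and $vT\varepsilon$, whence $uT\varepsilon Tv$ yields $(u,v)\in T$. The hard part is precisely that, in these concluding steps, the horizontal relation is the reflexive $S$ rather than an equivalence relation, so the equivalence-relation Triangular and Shifting Lemmas of Proposition~\ref{Goursat and dist} do not apply verbatim; and the naive repair—replacing $S$ by the equivalence relation $SS^{\circ}=S^{\circ}S$ (an equivalence relation by Proposition~\ref{Goursat positive})—is useless, because the meet condition $R\wedge SS^{\circ}\leqslant T$ it would require is itself the degenerate case $x=y$ of the very statement being proved. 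The Shifting step is unproblematic, being covered by the reflexive-relation version of the Shifting Lemma valid in any Goursat category~\cite{GRT}. The genuine obstacle is thus the Triangular step ``$xT\alpha$, $\alpha S\beta$, $xR\beta\Rightarrow xT\beta$'' for a reflexive $S$, which does consume distributivity; I would dispatch it first as a reflexive-$S$ Triangular Lemma, proved by the same matrix technique (feeding the triangle through Theorem~\ref{Goursat chars}(iv) and closing with the equivalence-relation Triangular Lemma and the reflexive Shifting Lemma), and then feed it into the argument above.
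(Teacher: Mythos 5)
Your transport of the matrix argument of Proposition~\ref{Goursat and dist} goes through correctly up to the production of $\alpha,\beta,\delta,\varepsilon$: the four instances of $P$ do indeed use only the reflexivity of $S$, the two given $S$-links and the symmetry of $R$ and $T$, and the concluding Shifting step is covered by the reflexive-$S$ Shifting Lemma of~\cite{GRT}. You have also correctly isolated the one step that does not survive the weakening of $S$, namely the Triangular step $xT\alpha$, $\alpha S\beta$, $xR\beta\Rightarrow xT\beta$, and correctly observed that passing to $S^{\circ}S$ is circular. The problem is that your proposed repair of that step is circular as well. The reflexive-$S$ Triangular Lemma is exactly the case $x=u$ of the statement being proved, and if you feed that triangle through the same matrix, the relation $P$ and Theorem~\ref{Goursat chars}(iv) hand you back elements $\alpha',\beta'$ with $uT\alpha'$, $\alpha' S\beta'$, $uR\beta'$ --- a triangle of precisely the same shape, whose middle edge is still the reflexive relation $S$ and not an equivalence relation. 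So the promised closure ``with the equivalence-relation Triangular Lemma'' is not available, and the recursion never bottoms out. As it stands the proof has a genuine gap at exactly the point you flagged as the hard part.

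The paper sidesteps this obstacle by a different construction: instead of running the matrix argument on $X$ with $S$ as an edge, it builds two relations on the \emph{object} $S$ itself, defined purely in terms of $R$ and $T$ (namely $(aSb,cSd)\in P$ iff $aRc$ and $bRd$, and $(aSb,cSd)\in W$ iff $aTc$ and $bRd$), together with the kernel pair $\Eq(s_2)$. These are honest equivalence relations because $R$ and $T$ are, so the Trapezoid Lemma of Proposition~\ref{Goursat and dist} applies to them verbatim on the trapezoid with corners $xSu$, $uSu$, $ySv$, $vSv$; the reflexive relation $S$ never appears as an edge, only as the carrier. Decoding the resulting membership of $(uSu,vSv)$ in $(P\wedge\Eq(s_2))\,W\,(P\wedge\Eq(s_2))$ and using $R\wedge S\leqslant T$ twice then gives $uTv$. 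To salvage your approach you would need an independent proof of the reflexive-$S$ Triangular Lemma, and the natural way to obtain one is essentially this lifting-to-$S$ trick.
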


\begin{proof} The proof of this result is based on that of Proposition 4.4 in~\cite{GRT} which claims that a Goursat category satisfies the Shifting Lemma when $S$ is a reflexive relation and $R$ and $T$ are equivalence relations.
	
	Let $R$ and $T$ be equivalence relations and let $S$ be a reflexive relation on an object $X$ such that $R\wedge S\leqslant T$. Suppose that we have $(x,y)\in T$, $(x,u) \in S$, $(y,v)\in S$ and $(u,v)\in R$
	$$	\xymatrix@C=30pt{
		& x \ar@{-}[dl]_-T \ar[r]^-S & u \ar@{-}[d]^-R \\
		y \ar[rr]_-S  & & v. }$$
	We are going to show that $(u,v)\in T$ .
	
	Consider the two relations $P$ and $W$ on $S$ defined as follows: \\
	$(aSb, cSd)\in P$ if and only if $aRc$ and $bRd$:
	$$
	\xymatrix{ 	a \ar[r]^S \ar@{-}[d]_R & b \ar@{-}[d]^R \\
		c \ar[r]_S  & d
	}
	$$
	while $(aSb, cSd)\in W$ if and only if $aTc$ and $bRd$:
	$$
	\xymatrix{a \ar[r]^S \ar@{-}[d]_T & b \ar@{-}[d]^R \\
		c \ar[r]_S  & d
	}
	$$
	The relations $P$ and $W$ are  equivalence relations on $S$ since $R$ and $T$ are both equivalence relations.
	Given the equivalence relations $P$, $\Eq(s_2)$ and $W$ on $S$, since $\C$ is Goursat category, one has $$
	\begin{array}{lcl}
	(\,P\wedge \Eq(s_2)\,)\, \vee\, W & = & (\,P\wedge \Eq(s_2)\,)\,W\,(\, P\wedge \Eq(s_2)\,) \vspace{3pt}\\
	& = & W\,(\,P\wedge \Eq(s_2)\,)\,{W,}
	\end{array}
	$$
	which is an equivalence relation (Theorem~\ref{Mal'tsev chars} (iii)).
	
	Since $$P \wedge \Eq(s_2)\leqslant (\,P\wedge \Eq(s_2)\,)\,\vee\, W$$ and $\C$ is a Goursat and equivalence distributive category, by Proposition \ref{Goursat and dist}, we can apply the Trapezoid Lemma to the following diagram
	
	$$
	\xymatrix@C=45pt{
		& xSu \ar@{-}[dl]_-{(\, P\wedge \Eq(s_2)\,)\, \vee\,W}  \ar@{-}[r]^-{\Eq(s_2)} & uSu \ar@{-}[d]_-P \ar@(r,r)@{--}[d]^-{(\, P\wedge \Eq(s_2)\,)\, \vee\,W} \\
		ySv   \ar@{-}[rr]_-{\Eq(s_2)}  & & vSv. }
	$$
	
	Note that, $uSu$ and $vSv$ by the reflexivity of $S$. We then obtain $$(uSu, vSv)\in (\, P\wedge \Eq(s_2)\,)\, \vee\,W =(\,P\wedge \Eq(s_2)\,)\,W\,(\, P\wedge \Eq(s_2)\,),$$ this means that there are $a$ and $b$ in $X$ such that
	$$
	(uSu) \left( P \wedge \Eq(s_2) \right) (aSu) W (bSv) \left( P \wedge \Eq(s_2) \right) (vSv),
	$$
	i.e.
	$$
	\xymatrix{
		u  \ar[r]^S  \ar@{-}[d]_R & u \ar@{-}[d]^R \\
		a \ar[r]^S  \ar@{-}[d]_T & u \ar@{-}[d]^R \\
		b \ar@{-}[d]_R \ar[r]^S &  v \ar@{-}[d]^R \\
		v \ar[r]_S & v.
	}
	$$
	Since $aRu$ ($R$ is symmetric), $aSu$ and $R\wedge S\leqslant T$, it follows that $aT u$; similarly one checks that $bTv$. From $uTa$ ($T$ is symmetric), $aTb$ and $bTv$, we
	conclude that $uTv$ ($T$ is transitive), as desired.
\end{proof}	

Since the Trapezoid Lemma implies the Triangular Lemma, we get the following:

\begin{corollary}	\label{Goursat => TL}
	In any equivalence distributive Goursat category $\C$, the Triangular Lemma holds when $S$ is a reflexive relation and $R$ and $T$ are equivalence relations.
\end{corollary}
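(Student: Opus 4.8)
The plan is to obtain the Triangular Lemma as an immediate specialisation of the Trapezoid Lemma already established in Proposition~\ref{Goursat => TrL}, so no fresh argument is needed. Concretely, I would start from a Triangular configuration as in \eqref{TL} for a reflexive relation $S$ and equivalence relations $R,T$ on an object $X$ with $R\wedge S\leqslant T$: that is, elements $y,u,v$ with $(u,y)\in T$, $(y,v)\in S$ and $(u,v)\in R$, and the goal $(u,v)\in T$. The key observation is that this data is exactly the case $x=u$ of the Trapezoid configuration \eqref{TpL}.

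Indeed, setting $x=u$, the hypothesis $(x,y)\in T$ of the Trapezoid Lemma becomes $(u,y)\in T$, the hypothesis $(x,u)\in S$ becomes $(u,u)\in S$ — which holds precisely because $S$ is assumed reflexive — while $(y,v)\in S$ and $(u,v)\in R$ are left unchanged, and the side condition $R\wedge S\leqslant T$ is the same in both statements. Since the hypotheses on $R$, $S$ and $T$ here (namely $S$ reflexive, $R$ and $T$ equivalence relations) coincide exactly with those of Proposition~\ref{Goursat => TrL}, I may apply that proposition to the resulting Trapezoid diagram to deduce $(u,v)\in T$, which is the desired conclusion of the Triangular Lemma.

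There is no genuine obstacle in this argument: the only point worth flagging is that the reduction relies on the reflexivity of $S$ to supply the missing top edge $(u,u)\in S$ needed to promote the Triangular diagram to a Trapezoid one. It is therefore essential that the standing assumptions on $S$ agree in the two statements, and they do. Hence the corollary follows in one line from the preceding proposition, exactly as anticipated by the remark that the Trapezoid Lemma implies the Triangular Lemma.
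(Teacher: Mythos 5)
Your proposal is correct and is exactly the paper's argument: the corollary is stated as an immediate consequence of Proposition~\ref{Goursat => TrL} via the observation that the Trapezoid Lemma implies the Triangular Lemma, which is precisely your specialisation $x=u$ using the reflexivity of $S$ to supply $(u,u)\in S$. Nothing further is needed.
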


We are now ready to prove the main result in this section:

\begin{theorem}	\label{Goursat <=> TTrL}
	Let $\C$ be a regular category. The following conditions are equivalent:
	\begin{enumerate}
		\item[(i)] $\C$ is an equivalence distributive Goursat category;
		\item[(ii)] the Trapezoid Lemma holds in $\C$  when $S$ is a reflexive relation and $R$ and $T$ are reflexive and positive relations;
		\item[(iii)] the Triangular Lemma holds in $\C$  when $S$ is a reflexive relation and $R$ and $T$ are reflexive and positive relations.
	\end{enumerate}
\end{theorem}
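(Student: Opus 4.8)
The plan is to establish the cycle (i) $\Rightarrow$ (ii) $\Rightarrow$ (iii) $\Rightarrow$ (i); the first two implications are short, and essentially all the content sits in the last one. For (i) $\Rightarrow$ (ii), suppose $\C$ is an equivalence distributive Goursat category and that $R,T$ are reflexive and positive while $S$ is reflexive. By Proposition~\ref{Goursat positive}, in a Goursat category every reflexive and positive relation is an equivalence relation, so $R$ and $T$ are in fact equivalence relations while $S$ is reflexive; this is precisely the hypothesis of Proposition~\ref{Goursat => TrL}, which delivers the Trapezoid Lemma. For (ii) $\Rightarrow$ (iii), I would specialise the Trapezoid Lemma to the Triangular Lemma in the usual way: given the data of~\eqref{TL} with $R,T$ reflexive and positive and $S$ reflexive, put $x:=u$; reflexivity of $S$ gives $(x,u)=(u,u)\in S$, and then $(x,y)=(u,y)\in T$, $(y,v)\in S$ and $(u,v)\in R$ are exactly the hypotheses of~\eqref{TpL}, so (ii) yields $(u,v)\in T$.

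The implication (iii) $\Rightarrow$ (i) would be proved in two steps. First I would show that $\C$ is a Goursat category. By Proposition~\ref{Goursat positive} it suffices to check that every reflexive and positive relation $E$ on an object $X$ is an equivalence relation; such an $E$ is reflexive and, by Lemma~\ref{lem}(i), symmetric, so only transitivity has to be established. The key observation is that $E$ is itself an admissible choice of reflexive and positive relation in the Triangular Lemma granted by (iii). Writing $\nabla_X = X\times X$ for the largest equivalence relation on $X$ --- which is reflexive and, being an equivalence relation, positive by Lemma~\ref{lem}(ii) --- I would, given $(x_0,x_1)\in E$ and $(x_1,x_2)\in E$, apply the Triangular Lemma with $R=\nabla_X$, $S=T=E$ and the points $u=x_0$, $y=x_1$, $v=x_2$. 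Here the hypothesis $R\wedge S\leqslant T$ reads $\nabla_X\wedge E = E\leqslant E$, while $(u,y)=(x_0,x_1)\in T=E$, $(y,v)=(x_1,x_2)\in S=E$ and $(u,v)=(x_0,x_2)\in R=\nabla_X$ all hold; the conclusion $(u,v)\in T$ is exactly $(x_0,x_2)\in E$. Thus $E$ is transitive, hence an equivalence relation, and $\C$ is Goursat.

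For the second step I would use that $\C$ is now a Goursat category together with Proposition~\ref{Goursat and dist}: since every equivalence relation is reflexive and positive (Lemma~\ref{lem}(ii)), the Triangular Lemma assumed in (iii) applies in particular to arbitrary equivalence relations, so the Triangular Lemma in the sense of Proposition~\ref{Goursat and dist}(i) holds, and that proposition yields equivalence distributivity, completing (iii) $\Rightarrow$ (i). The crux is the Goursat part of this implication: the extra strength of the hypothesis over the equivalence-relation version is exactly the freedom to feed a genuine reflexive and positive relation into the Triangular Lemma, and the argument turns on recognising that the relation $E$ whose transitivity is wanted is itself such an admissible input, with $\nabla_X$ supplying a free $R$-side. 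Unlike the Mal'tsev case of Theorem~\ref{Mal'tsev+dist <=> TTrL}, where symmetry of a reflexive relation had to be forced through an auxiliary construction on the relation object, here symmetry comes for free from positivity (Lemma~\ref{lem}(i)), leaving only transitivity, which drops out of a single application; the only care required is in verifying that $\nabla_X$ and $E$ satisfy the reflexivity-and-positivity constraints and that $R\wedge S\leqslant T$ holds.
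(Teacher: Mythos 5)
Your proposal is correct, and the implications (i)\,$\Rightarrow$\,(ii) and (ii)\,$\Rightarrow$\,(iii) match the paper's proof (which relies on Propositions~\ref{Goursat positive} and~\ref{Goursat => TrL} in the same way). Where you genuinely diverge is in the Goursat part of (iii)\,$\Rightarrow$\,(i). The paper verifies criterion (v) of Theorem~\ref{Goursat chars}: for an \emph{arbitrary} reflexive relation $E$ it applies the Triangular Lemma with $R=EE^{\circ}$, $S=E$, $T=E^{\circ}E$ (all three legs nontrivial, with $EE^{\circ}$ and $E^{\circ}E$ supplying the reflexive positive relations) to get $EE^{\circ}\leqslant E^{\circ}E$, mirroring the proof of Theorem 4.6 in~\cite{GRT}. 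You instead verify the criterion of Proposition~\ref{Goursat positive} directly: for a reflexive \emph{positive} $E$, symmetry is free from Lemma~\ref{lem}(i), and transitivity falls out of the instance $R=\nabla_X$, $S=T=E$, which in relational form is just the implication $E\leqslant E\Rightarrow EE\leqslant E$. Your route is shorter and isolates exactly how little of hypothesis (iii) is needed for the Goursat conclusion (only the degenerate case $R=\nabla_X$, $S=T$); the paper's route establishes the a priori stronger-looking fact $EE^{\circ}=E^{\circ}E$ for \emph{all} reflexive relations and keeps the argument structurally parallel to the Mal'tsev case of Theorem~\ref{Mal'tsev+dist <=> TTrL}. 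Both are sound. A small point in your favour: you spell out the final step that equivalence distributivity follows from Proposition~\ref{Goursat and dist} once Goursat is established (every equivalence relation being an admissible input by Lemma~\ref{lem}(ii)), a step the printed proof of this theorem leaves implicit.
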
	

\begin{proof}
	(i) $\Rightarrow$ (ii)	Since $\C$ is a Goursat category, by Proposition \ref{Goursat positive}, reflexive and positive relations are necessarily equivalence relations. Since $\C$ is also equivalence distributive, by Proposition \ref{Goursat => TrL}, the Trapezoid Lemma holds when $S$ is a reflexive relation and $R$ and $T$ are reflexive and positive relations.\\
	(ii) $\Rightarrow$ (iii) is obvious.\\
	(iii) $\Rightarrow$ (i) We follow the proof of Theorem 4.6 of~\cite{GRT} with respect to the implication: if the Shifting Lemma holds in $\C$ when $S$ is a reflexive relation and $R$ and $T$ are reflexive and positive relations, then $\C$ is a Goursat category. The main issue is to fit the rectangle to which we applied the Shifting Lemma in that result, into a suitable triangle to which we shall now apply the Triangular Lemma (to get the same conclusion that $\C$ is a Goursat category).
	
	To prove that $\C$ is a Goursat category, we show that for any reflexive relation $E$ on $X$ in $\C$, $EE^\circ= E^\circ E$ (Theorem~\ref{Goursat chars}(v)). Suppose that $(x,y)\in EE^\circ$. Then, for some $z$ in $X$, one has that $(z,x)\in E$ and $(z,y)\in E$. Consider the reflexive and positive relations $EE^\circ$ and $E^\circ E$, and the reflexive relation $E$ on $X$. From the reflexivity of $E$, we get $E\leqslant EE^\circ$ and $E\leqslant E^\circ E$; thus $EE^\circ \wedge E=E\leqslant E^\circ E$. We may apply our assumption (for $R=EE^{\circ}, S=E, T=E^{\circ}E$) to the following relations given in solid lines:
	
	$$
	\xymatrix@C=75pt{
		{} & x \ar[d]_-{EE^\circ} \ar@(r,r)@{-->}[d]^-{E^\circ E} \ar[dl]_{E^\circ E}   \\
		z   \ar[r]_-E  & y }
	$$
	to conclude that $(x,y)\in E^\circ E$. Having proved that $EE^\circ \leqslant E^\circ E$ for \emph{every} reflexive relation $E$, the equality $E^\circ E\leqslant EE^\circ$ follows immediately.
\end{proof}

We finish this section with a characterisation of equivalence distributive Goursat categories through a property on ternary relations which is stronger than condition (iv) of Theorem~\ref{Goursat chars}. The process to obtain such a characterisation is similar to what was done to obtain Theorem~\ref{ternary difunctionality} for the Mal'tsev context. Therefore, we only give the main features leaving the proof for the reader to complete.

Recall from~\cite{Lipparini} that a $3$-permutable congruence distributive variety is such that there exists ternary terms $r(x,y,z)$ and $s(x,y,z)$ such that $r(x,y,y)=x, r(x,x,y)=s(x,y,y), s(x,x,y)=y$ and $r(x,y,x)=x=s(x,y,x)$. It is easy to check that, equivalently, such varieties admit quaternary terms $p(x,y,z,w)$ and $q(x,y,z,w)$ such that $p(x,y,y,z)=x$, $p(x,x,y,y)=q(x,x,y,y)$, $q(x,y,y,z)=z$ and $p(x,y,z,x)=x=q(x,y,z,x)$.

These Mal'tsev conditions translate into a property on relations (following the technique in~\cite{JanelidzeV}) which is expressed for ternary relations $P\rightarrowtail (X\times A)\times U$, seen as binary relations from $X\times A$ to $U$, as

\begin{equation}\label{PP}
\begin{array}{c}
(x,a) P u \\
(y,b) P u \\
(y,c) P v \\
(z,a) P v \\
\hline
(x,a) P w \\
(z,a) P w,
\end{array}
\end{equation}
for some $w$ in $U$.

In a regular context, property \eqref{PP} is equal to
$$
\Eq(\pi_A)\wedge PP^{\circ}\Eq(\pi_X)PP^{\circ}\leqslant PP^{\circ},
$$
and one can prove the following:
\begin{theorem}\label{quaternary difunctionality}
	Let $\C$ be a regular category. The following conditions are equivalent:
	\begin{enumerate}
		\item[(i)] $\C$ is an equivalence distributive  Goursat category;
		\item[(ii)] any relation $P\rightarrowtail (X\times A)\times U$ has property \eqref{PP}.
	\end{enumerate}
\end{theorem}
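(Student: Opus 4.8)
The plan is to mirror the structure of the proof of Theorem~\ref{ternary difunctionality}, which characterised equivalence distributive Mal'tsev categories, but now adapting every step to the Goursat setting where difunctionality is replaced by the weaker condition $PP^\circ PP^\circ = PP^\circ$ of Theorem~\ref{Goursat chars}(iv), and where equivalence relations compose only $3$-permutably. The equivalence is to be proven in two directions, with the reverse direction (ii) $\Rightarrow$ (i) carrying the bulk of the work.

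For the direction (i) $\Rightarrow$ (ii), I would take a relation $P\rightarrowtail (X\times A)\times U$ and suppose the four hypotheses of \eqref{PP} hold. Form the kernel pairs $\Eq(p_1)$, $\Eq(p_2)$, $\Eq(p_3)$ of the three projections of $P$ (viewed as a subobject of $X\times A\times U$). Reading off the matching of coordinates in \eqref{PP}, the four hypotheses translate into statements relating the tuples $(x,a,u)$, $(y,b,u)$, $(y,c,v)$, $(z,a,v)$ through these kernel pairs, so that one obtains chains placing $(x,a,u)$ and $(z,a,v)$ in a composite of the form $\Eq(p_1)\bigl(\Eq(p_2)\wedge\Eq(p_3)\bigr)\Eq(p_1)$ (the outer $\Eq(p_1)$'s reflecting that the first coordinate is free to change while the $A$- and $U$-coordinates are pinned). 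Here I would invoke equivalence distributivity together with the Goursat join formula $\Eq(p_1)\vee(\Eq(p_2)\wedge\Eq(p_3)) = \Eq(p_1)(\Eq(p_2)\wedge\Eq(p_3))\Eq(p_1)$ of Theorem~\ref{Goursat chars}(iii) to distribute and extract a suitable intermediate tuple, concluding that certain $(x,a,w')$ and $(z,a,w')$ lie in $P$; a final application of $PP^\circ PP^\circ\leqslant PP^\circ$ (Theorem~\ref{Goursat chars}(iv)) then delivers the two bottom relations $(x,a)Pw$ and $(z,a)Pw$ for a common $w$.

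For the converse (ii) $\Rightarrow$ (i), I would first specialise to $A=\mathbf 1$: with the $A$-coordinate trivial, $\Eq(\pi_A)$ becomes the total relation and property \eqref{PP} collapses to exactly $PP^\circ PP^\circ\leqslant PP^\circ$ for every relation $P$, which by Theorem~\ref{Goursat chars}(iv) means $\C$ is Goursat. Granting this, by Proposition~\ref{Goursat and dist} it suffices to verify the Triangular Lemma to obtain equivalence distributivity. So I would take equivalence relations $R,S,T$ with $R\wedge S\leqslant T$ and the configuration of \eqref{TL}, define an auxiliary ternary relation $P\rightarrowtail(X\times X)\times X$ by $(a,b)Pc \Leftrightarrow \exists d:\, dSa,\ dTb,\ dRc$ (paralleling the Mal'tsev proof), feed a carefully chosen quadruple of instances into \eqref{PP} to produce a new point, and then close the argument with the Shifting Lemma (valid in any Goursat category). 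The hard part will be the bookkeeping in this last step: because \eqref{PP} has four input rows and two output rows rather than the three-and-one pattern of \eqref{DD}, I must choose the witnesses $d$ in each of the four input rows so that the emergent element $w$ feeds correctly into a Shifting Lemma rectangle yielding $uTv$. Managing the extra row and the existentially quantified output $w$, while respecting that $R,S,T$ are merely $3$-permutable rather than $2$-permutable, is the delicate point; once the right quadruple is identified the remaining verifications are the routine translations between elementwise and relational formulations justified by Barr's embedding.
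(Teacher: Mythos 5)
Your overall architecture is exactly what the paper intends: the paper gives no actual proof of Theorem~\ref{quaternary difunctionality}, saying only that the process is similar to that of Theorem~\ref{ternary difunctionality}, and your plan is that process correctly transposed to the Goursat setting --- the case $A=\mathbf{1}$ recovering $PP^{\circ}PP^{\circ}\leqslant PP^{\circ}$ and hence Goursat via Theorem~\ref{Goursat chars}(iv), the reduction of distributivity to the Triangular Lemma via Proposition~\ref{Goursat and dist}, and the auxiliary relation $(a,b)Pc \Leftrightarrow \exists d:\, dSa,\ dTb,\ dRc$ are all the right moves. Two points, however, need repair or completion.

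First, in (i) $\Rightarrow$ (ii) the composite you name, $\Eq(p_1)\bigl(\Eq(p_2)\wedge\Eq(p_3)\bigr)\Eq(p_1)$, is the wrong one: its outer factors $\Eq(p_1)$ pin the $X$-coordinate and let the $A$-coordinate drift, so chasing an element through it only produces $(x,a')Pu'$ and $(z,a')Pu'$ for some uncontrolled $a'$, not the common $a$ that the conclusion of \eqref{PP} requires. What the four hypotheses actually give is $\bigl((x,a,u),(z,a,v)\bigr)\in \Eq(p_2)\wedge\Eq(p_3)\Eq(p_1)\Eq(p_3)=\Eq(p_2)\wedge\bigl(\Eq(p_1)\vee\Eq(p_3)\bigr)$; distributivity turns this into $\bigl(\Eq(p_2)\wedge\Eq(p_1)\bigr)\vee\bigl(\Eq(p_2)\wedge\Eq(p_3)\bigr)=\bigl(\Eq(p_2)\wedge\Eq(p_3)\bigr)\bigl(\Eq(p_2)\wedge\Eq(p_1)\bigr)\bigl(\Eq(p_2)\wedge\Eq(p_3)\bigr)$, all of whose factors lie below $\Eq(p_2)$ and therefore keep $a$ fixed; the chase then yields $(x_1,a)Pu$ and $(x_1,a)Pv$ for some $x_1$, and $PP^{\circ}PP^{\circ}\leqslant PP^{\circ}$ finishes. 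Your parenthetical correctly describes which coordinates must be pinned, so this is an index slip rather than a conceptual error, but the step fails as literally written. Second, the ``carefully chosen quadruple'' you defer does exist and should be exhibited: with the configuration of \eqref{TL} take $(u,y)Pv$, $(y,v)Pv$, $(y,y)Py$, $(v,y)Py$ (witnesses $d=u,v,y,y$ respectively); property \eqref{PP} returns some $w$ with $(u,y)Pw$ and $(v,y)Pw$, i.e.\ elements $d_1,d_2$ with $d_1Su$, $d_1Ty$, $d_1Rw$ and $d_2Sv$, $d_2Ty$, $d_2Rw$, and the Shifting Lemma \eqref{SL} applied to the square with vertices $d_1,u,d_2,v$ (using $d_1\,(R\wedge T)\,d_2$ through $w$ and $y$) gives $uTv$. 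With these two adjustments the plan goes through.
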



\begin{thebibliography}{10}
	\bibitem{Barr} M. Barr, P. A. Grillet and D. H. van Osdol, \emph{Exact categories and categories of sheaves}, Lecture Notes in Math. 236, Springer-Verlag (1971).
	\bibitem{BB} F. Borceux, D. Bourn, \emph{Mal'cev, protomodular, homological and semi-abelian categories}, Kluwer, (2004).
	\bibitem{DBourn} D. Bourn, \emph{Congruence distributivity in Goursat and Mal'tsev categories}, Appl. Categ. Structures (2005) 13: 101-111.
	\bibitem{BG0} D. Bourn, M. Gran, \emph{Categorical Aspects of Modularity}, Galois Theory, Hopf Algebras and Semiabelian Categories, Fields Instit. Commun., 43, Amer. Math. Soc., Providence RI (2004) 77-100.
	\bibitem{BG1} D. Bourn, M. Gran, \emph{Normal sections and direct product decompositions}, Comm. Algebra. 32, no. 10 (2004) 3825-3842.
	\bibitem{BGJ} D. Bourn, M. Gran, P.-A. Jacqmin, \emph{On the naturalness of Mal'tsev categories}, preprint arXiv:1904.06719 (2019).
	\bibitem{CKP} A. Carboni, G.M. Kelly, M.C. Pedicchio, \emph{Some remarks on Maltsev and Goursat categories}, Appl. Categ. Structures 1, no.4 (1993) 385-421.
	\bibitem{CLP} A. Carboni, J. Lambek, M.C. Pedicchio, \emph{Diagram chasing in Mal'cev categories}, J. Pure Appl. Algebra 69, no. 3 (1991) 271-284.
	\bibitem{CPP} A. Carboni, M.C. Pedicchio, N. Pirovano, \emph{Internal graphs and internal groupoids in Mal'cev categories}, Category theory 1991 (Montreal, PQ, 1991), 97-109, CMS Conf. Proc., 13, Amer. Math. Soc., Providence, RI, 1992.
	\bibitem{Trapezoid} I. Chajda, G. Cz\'edli, E. Horv\'ath, \emph{Trapezoid Lemma and congruence distributivity}, Math. Slovaca 53 (2003), No. 3, 247-253.
	\bibitem{Duda1} J. Duda, \emph{The Upright Principle for congruence distributive varieties}, Abstract of a
	seminar lecture presented in Brno, March, 2000.
	\bibitem{Duda2} J. Duda, \emph{The Triangular Principle for congruence distributive varieties}, Abstract of a
	seminar lecture presented in Brno, March, 2000.
	\bibitem{GRT} M. Gran, D. Rodelo, I. Tchoffo Nguefeu, \emph{Variations of the Shifting Lemma and Goursat categories}, Algebra Universalis (2018) 80:2.
	\bibitem{GR} M. Gran, J. Rosick\'y, \emph{Special reflexive graphs in modular varieties}, Algebra Universalis 52 (2004) 89-102.
	\bibitem{GSV} M. Gran, F. Sterck, J. Vercruysse, \emph{A semi-abelian extension of a theorem by Takeuchi}, J. Pure Appl. Algebra 223 no. 10 (2019) 4171-4190.
	\bibitem{Gumm} H.P. Gumm, \emph{Geometrical methods in congruence modular algebras}, Mem. Amer. Math. Soc. 45 (1983) no. 286.
	\bibitem{HM} J. Hagemann, A. Mitschke, \emph{On $n$-permutable congruences}, Algebra Universalis 3 (1973) 8-12.
	\bibitem{Hoefnagel} M. Hoefnagel, \emph{Majority categories}, Theory Appl. Categories 34 (2019)  249-268.
	\bibitem{Hoef2} M. Hoefnagel, \emph{Characterizations of majority categories}, Appl. Categ. Structures 28 (2020) 113-134.
	\bibitem{Hoef3} M. Hoefnagel, \emph{A categorical approach to lattice-like structures}, Ph.D thesis (2018)
	\bibitem{GJ} G. Janelidze, \emph{A history of selected topics in categorical algebra I: From Galois theory to abstract commutators and internal groupoids}, Categories and General Algebraic Structures with Applications 5, no. 1 (2016) 1-54.
	\bibitem{JanelidzeI} Z. Janelidze, \emph{Closedness properties of internal relations I: a unified approach to Mal'tsev, unital and subtractive categories}, Th. Appl. Categ, Vol. 16, No. 12 (2006) 236–261.
    \bibitem{JanelidzeV} Z. Janelidze, \emph{Closedness properties of internal relations V: linear Mal’tsev conditions}, Algebra Universalis 58 (2008) 105-117.
	\bibitem{JRVdL} Z. Janelidze, D. Rodelo, T. Van der Linden,  \emph{Hagemann's theorem for regular categories}, J. Homotopy Relat. Structures 9(1) (2014) 55-66.
	\bibitem{Jonsson53} B. J\'{on}sson, \emph{On the representation of lattices}, Math. Scand. 1 (1953) 193--06.
	\bibitem{Lipparini} P. Lipparini, \emph{n-Permutable varieties satisfy non trivial congruence identities}, Algebra Universalis \textbf{33} (2018) 159-168.
	\bibitem{Malcev} A.I. Mal'cev, \emph{On the general theory of algebraic systems}, Mat. Sbornik N.S. 35 (1954), 3-20.
	\bibitem{Mit} A. Mitschke, \emph{Implication algebras are $3$-permutable and $3$-distributive} Algebra Universalis \textbf{1} (1971) 182-186.
	\bibitem{Pedic} M. C. Pedicchio, \emph{Arithmetical categories and commutator theory}, Appl. Categ. Structures 4, (1996) 297-305.
	\bibitem{Pixley} A.F. Pixley, \emph{Characterizations of arithmetical varieties}, Algebra Universalis \textbf{9} (1979) 87-98.
	\bibitem{Riguet} J. Riguet, \emph{Relations binaires, fermeture, correspondances de Galois}, Bulletin de la Soci\'et\'e Math\'ematique de France \textbf{76} (1948) 114-155.
	\bibitem{Sel} P. Selinger, \emph{Dagger Compact Closed Categories and Completely Positive Maps}, Electronic Notes in Theoretical Computer Science 170 (2007) 139-163.
	\bibitem{Smith} J.D.H. Smith, \emph{Mal'cev varieties}, Lecture Notes in Math. 554, Springer-Verlag (1976).
\end{thebibliography}
\end{document}